\theoremstyle{plain}
\newtheorem{theorem}{Theorem}[section]
\newtheorem*{maintheorem}{Main Theorem}
\newtheorem{lemma}{Lemma}[section]
\newtheorem{corollary}[theorem]{Corollary}
\newtheorem{proposition}{Proposition}[section]
\newtheorem*{question}{Question}
\newtheorem{definition}{Definition}[section]
\theoremstyle{remark}
\newtheorem{remark}{Remark}[section]
\newtheorem{example}{Example}[section]
\def\QEDclosed{\mbox{\rule[0pt]{1.3ex}{1.3ex}}}
\begin{document}

\date{}

\title{Incompleteness of pressure metric on Teichm\"uller space of a bordered surface}
  \author{Binbin Xu \\
\small \em Korea Institute for Advanced Study (KIAS)\\
\small \em 85 Hoegi-ro, Dongdaemun-gu, \\
\small \em 02455, Seoul, Republic of Korea\\
\small \em e-mail: {\tt xubin@kias.re.kr}\\
}

\maketitle
\begin{abstract}
We prove that the pressure metric on the Teichm\"uller space of a bordered surface is incomplete and its partial completion can be given by the moduli space of metric graphs for a fat graph associated to the same bordered surface equipped with pressure metric. As a corollary, we show that the pressure metric is not a constant multiple of the Weil-Petersson metric which is different from the closed surface case.
\end{abstract}

\tableofcontents

\section{Introduction}
The pressure metric was first introduced by McMullen \cite{Mcm} and Bridgeman \cite{Bri2} for Fuchsian representation space and quasi-Fuchsian representation space of the fundamental group of a closed surface respectively. By generalizing their work, Bridgeman, Canary, Labourie and Sambarino introduced in \cite{BCLS} the pressure metric on $\mathcal{C}(\Gamma,m)$, the space of conjugacy classes of regular irreducible convex representations of $\Gamma$ to ${\rm SL}(m,\mathbb{R})$, where $\Gamma$ is a word hyperbolic group. Meanwhile, inspired by the work of McMullen and Bridgeman, Pollicott and Sharp \cite{SP} introduced the Weil-Petersson type metric on the moduli space of metric graphs. The main ingredient of these results is the thermodynamic formalism developed by Bowen, Parry-Pollicott, Ruelle and others.

Let $S$ be an oriented hyperbolic surface of finite type. It is known that its fundamental group $\pi_1(S)$ is Gromov hyperbolic. By applying the results in \cite{BCLS}, the pressure metric is well defined for $\mathcal{C}(\pi_1(S),m)$. As a special case, when $m=2$, the space $\mathcal{C}(\pi_1(S),2)$ contains the Teichm\"uller space $\mathcal{T}(S)$ of $S$. When $S$ is closed, by the work of Wolpert \cite{Wo4}, the pressure metric is a constant multiple of the Weil-Petersson metric on $\mathcal{T}(S)$. Therefore in this case we know a lot of properties of the pressure metric from studying the Weil-Petersson metric. 

Now let us consider the surface $S$ with non-empty boundary. One immediate question is that:
\begin{question}
	Is the pressure metric on $\mathcal{T}(S)$ still a constant multiple of the Weil-Petersson metric?
\end{question}

One object that we will use to study this problem is the fat graph. A \textbf{fat graph} is a connected finite graph such that each vertex has its valence at least $3$ and is associated with a cyclic order among the half-edges adjacent to it. The relation between fat graphs and oriented bordered surfaces is the following. With the cyclic order around each vertex, we can define oriented cycle path on the graph. More precisely, starting walking along one edge, we always turn to the next edge at each vertex according to the cyclic order. By gluing a cylinder to each such cycle path, we obtain an oriented bordered surface. Conversely, an oriented bordered surface associated to a fat graph can retract to this fat graph by retracting every cylinder to a circle.

\begin{remark}
We can always assume that the valence of each vertex in the fat graph is $3$. We call it a tri-valence fat graph. Those fat graphs with higher vertex valence can be obtained from a tri-valence fat graph by retracting some of its edges to points. \QEDclosed
\end{remark}
\begin{remark}
If we change the cyclic order around vertices of a fat graph, the topology type of its associated surface may change. \QEDclosed
\end{remark}
\begin{remark}
For one fixed oriented bordered surface, the dual graph of any ideal triangulation is a tri-valence fat graph where the cyclic order at each vertex is induced by the orientation of the surface. \QEDclosed
\end{remark} 

Let $\mathbb{G}$ be a finite graph with valence at each vertex bigger or equal to $3$. A metric on $\mathbb{G}$ is a positive weight of its edges. The moduli space of metrics on $\mathbb{G}$ is the space of all metrics renormalized such that the topological entropy of the associated geodesic flow on $\mathbb{G}$ is $1$. Let us denote the moduli space by $\mathcal{M}_1(\mathbb{G})$. In \cite{SP}, Pollicott and Sharp studied this space and introduced the Weil-Petersson type metric on it. In our paper, we renormalize this metric and define the pressure metric on $\mathcal{M}_1(\mathbb{G})$.

Our main result is stated as follows:
\begin{maintheorem}\label{completion}
 The pressure metric on $\mathcal{T}(S)$ is not complete. Moreover there exists a fat graph $\mathbb{G}$ for $S$, such that $\mathcal{T}(S) \sqcup\mathcal{M}_1(\mathbb{G})$ is its partial completion where $\mathcal{M}_1(\mathbb{G})$ is equipped with the pressure metric. 
\end{maintheorem}

\begin{remark}
	In our construction, we are able to glue the space $\mathcal{M}_1(\mathbb{G})$ to part of the boundary of $\mathcal{T}(S)$ such that the pressure metrics on the two spaces can be also put together to give a metric on $\mathcal{T}(S) \sqcup\mathcal{M}_1(\mathbb{G})$. This is what we mean by "partial completion". \QEDclosed
\end{remark}

The proof is from studying certain path in $\mathcal{T}(S)$ along which the hyperbolic surfaces degenerate to $\mathbb{G}$ equipped with a metric determined by the degenerating path. We show that these paths have finite lengths with respect to the pressure metric.

At the same time, by our definition of Weil-Petersson metric on $\mathcal{T}(S)$, we can see that these paths have finite lengths with respect to the Weil-Petersson metric by the work of Wolpert in \cite{Wo2}. Using the work of Mazur in \cite{Ma1}, we are able to describe its metric completion. By showing that the pressure metric and the Weil-Petersson metric have different completions, we are able to give a negative answer of the problem above:
\begin{corollary}\label{WP}
 The pressure metric is not a constant multiple of the Weil-Petersson metric on $\mathcal{T}(S)$.
\end{corollary}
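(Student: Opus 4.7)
The plan is to derive the corollary from the Main Theorem by comparing the two metric completions. Suppose toward contradiction that the pressure metric $g_P$ equals $c \cdot g_{WP}$ for some positive constant $c$. Then $g_P$-Cauchy sequences coincide with $g_{WP}$-Cauchy sequences, and the identity on $\mathcal{T}(S)$ extends to a homeomorphism between the two metric completions (in fact to a scaled isometry), so any geometric feature of one completion must be reproduced in the other.

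Next, I would reuse the explicit degenerating family $\{m_t\}_{t \in [0,1)} \subset \mathcal{T}(S)$ constructed in the proof of the Main Theorem, along which all curves of a pants decomposition of $S$ are pinched and the hyperbolic surfaces retract onto a fat graph $\mathbb{G}$. By the Main Theorem this family has finite $g_P$-length and converges to a point of $\mathcal{M}_1(\mathbb{G})$ in the partial pressure-completion. On the other hand, Wolpert's finite-length theorem \cite{Wo2} guarantees that the same pinching family has finite $g_{WP}$-length, and Masur's description \cite{Ma1} identifies its $g_{WP}$-limit with a point of the augmented Teichm\"uller space, namely a noded hyperbolic surface whose pieces are (rigid) hyperbolic pairs of pants.

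The contradiction is then obtained by a dimensional comparison of the two completions at this limit. Varying the relative pinching rates produces a continuous family of pressure-metric limits sweeping out an open set of $\mathcal{M}_1(\mathbb{G})$, and under the proportionality hypothesis these distinct limits must correspond to distinct points of the Weil--Petersson completion. However, the stratum of the WP completion reached by maximal pinching is parametrized only by the lengths of the boundary geodesics of $S$, whose dimension equals the number of boundary components of $S$; this is strictly smaller than the dimension of $\mathcal{M}_1(\mathbb{G})$, which is governed by the edge count of the trivalent fat graph $\mathbb{G}$. The resulting mismatch contradicts the bi-Lipschitz identification of completions, so no such constant $c$ can exist.

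The main obstacle in this plan is making precise the assertion that, along the constructed path, the pressure-metric and Weil--Petersson limits can be matched up strongly enough to compare local geometry; this requires combining the explicit asymptotics from the proof of the Main Theorem with Masur's control near the Weil--Petersson boundary. Once that matching is established, the dimension count and the resulting obstruction to proportionality are essentially formal.
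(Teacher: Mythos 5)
Your overall strategy---compare the two completions along the degenerating family and derive a contradiction from a dimension mismatch---is the same as the paper's, but the step where you identify the Weil--Petersson limit is wrong, and this is exactly the step that carries the argument. You assert that the stratum of the WP completion reached by maximal pinching is ``parametrized by the lengths of the boundary geodesics of $S$'' and hence has dimension $r$. This is false in the setup at hand. The Weil--Petersson metric on $\mathcal{T}(S)$ is defined by embedding into $\mathcal{T}(\mathrm{D}S)$ via doubling, and the doubled ideal triangulation $T$ is already a full pants decomposition of the closed surface $\mathrm{D}S$ (it has $3s=-3\chi(S)=3(2(2g+r-1))-3)/2\cdot\ldots$, i.e.\ exactly $6g'+{-6}$ curves for $g'=2g+r-1$, with no boundary curves of $S$ among them). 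Pinching all of these curves produces a noded surface all of whose pieces are thrice-punctured spheres, which are rigid; the boundary geodesics of $S$ intersect the pinched curves and therefore have length tending to infinity rather than surviving as moduli. By Masur's theorem the entire end of the cone $\mathcal{C}$ converges to a \emph{single point} of the augmented Teichm\"uller space. Note also the internal tension in your own write-up: you call the limiting pieces ``(rigid)'' and in the same sentence assign the stratum $r$ positive-dimensional parameters.

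This is not merely a cosmetic slip: even if one granted your dimension count, the inequality $r<\dim\mathcal{M}_1(\mathbb{G})=3s-1$ fails for small surfaces (for the pair of pants $S_{0,3}$ one has $s=1$, so $\dim\mathcal{M}_1(\mathbb{G})=2$ while $r=3$), so your contradiction would not go through in all cases. The correct and much cleaner comparison, which is the paper's, is: all paths in the cone $\mathcal{C}$ pinch the \emph{same} pants decomposition of $\mathrm{D}S$, hence the whole cone end has a one-point WP completion, whereas by Theorem \ref{sol} the pressure-metric completion of the same end is all of $\mathcal{M}_1(\mathbb{G})$, which is positive-dimensional. Proportionality of the metrics would force these completions to agree up to scaled isometry, which is impossible. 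Replace your dimension count with this ``point versus positive-dimensional space'' comparison and the rest of your outline (finite length on both sides via the Main Theorem and Wolpert, uniqueness of metric completions) is sound.
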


\section*{Acknowledgement:}
The author would like to thank Greg McShane for introducing such an interesting question and his useful discussions and suggestions. He would like to thank Martin Bridgeman, Mark Pollicott and Richard Sharp for carefully reading the previous manuscript and giving useful comments and suggestions. He would like to thank Pierre Dehornoy, Lien-Yung Kao, Louis Funar and Andres Sambarino for their useful discussions and suggestions. The author acknowledges support from U.S. National Science Foundation grants DMS 1107452, 1107263, 1107367 "RNMS: GEometric structures And Representation varieties" (the GEAR Network).

\section{Teichm\"uller space}
Let $S=S_{g,r}$ be a surface of genus $g$ with $r$ boundary components.
\subsection{Definition of Teichm\"uller space}
A \textbf{\textit{hyperbolic structure}} $m$ on $S$ is a Riemannian metric with constant curvature $-1$. Let ${\rm Homeo}^+(S)$ denote the set of all orientation preserving homeomorphisms on $S$. This set has a group structure given by compositions of homeomorphisms. If $S$ has non-empty boundary, we only consider those hyperbolic structures such that all boundary components are simple closed geodesics, instead of punctures or cone singularities. We ask all homeomorphisms to fix each boundary component setwise. 
\begin{definition}
A \textbf{marked hyperbolic structure} on $S$ is a pair $(m,f)$ where $m$ is a hyperbolic structure on $S$ and $f\in{\rm Homeo}^+(S)$. Two such structures $(m_1,f_1)$ and $(m_2,f_2)$ are said to be \textbf{equivalent} if and only if there exists an isometry $\phi:(S,m_1)\rightarrow (S,m_2)$ such that $\phi\circ f_1$ is homotopic to $f_2$. We denote it by $(m_1,f_1)\sim(m_2,f_2)$.
\end{definition}
\begin{definition}
 The \textbf{Teichm\"uller space}
 $\mathcal{T}(S)$ of
 $S$ is defined
 to be the quotient space:
 \begin{equation*}
  \mathcal{T}(S):=
  \{\textrm{marked hyperbolic structures
  on $S$}\}/\sim.
 \end{equation*}
\end{definition}

In each class there is one representative whose marking is the identity map of $S$. We will identify the Teichmuller space $\mathcal{T}(S)$ with the space of marked hyperbolic structure $(m,f)$ where $f=\mathrm{id}$. As a convention, we will simply use the metric $m$ to denote a point in $\mathcal{T}(S)$.

\subsection{Weil-Petersson metric on $\mathcal{T}(S)$}
In this part, we first recall the Weil-Petersson metric on $\mathcal{T}(S)$ of closed surface $S$. Then we give our definition of the Weil-Petersson metric for bordered surface case. 

The original definition of Weil-Petersson metric on $\mathcal{T}(S)$ comes from the study of the moduli space of Riemann surfaces. Recall that there is a 1-1 correspondence between hyperbolic structures and complex structures on a surface. The Teichm\"uller space can also be defined as the moduli space of marked complex structures on a closed surface. When the surface equipped a complex structure, we call it a Riemann surface. The fiber of cotangent space of $\mathcal{T}(S)$ based at a Riemann surface $M$ can be identified with the space of holomorphic quadratic differentials on $M$. The Petersson pairing for holomorphic quadratic differentials induces the Weil-Petersson co-metric which moreover induces the Weil-Petersson metric on the tangent space of $\mathcal{T}(S)$.

This metric have lots of interesting properties. In particular, we are interested in the following two properties:
\begin{theorem}[Wolpert \cite{Wo2}]
	The Weil-Petersson metric on $\mathcal{T}(S)$ is not complete.
\end{theorem}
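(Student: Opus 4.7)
The plan is to exhibit an explicit Cauchy sequence in $\mathcal{T}(S)$ (with respect to the Weil-Petersson metric) whose limit point lies outside $\mathcal{T}(S)$, namely a pinching sequence along a simple closed geodesic. Fix any essential simple closed curve $\gamma$ on $S$ that is not homotopic to a boundary component, and extend it to a pants decomposition $\{\gamma, \gamma_2, \ldots, \gamma_N\}$ of $S$. Work in Fenchel-Nielsen coordinates $(\ell_\gamma, \tau_\gamma, \ell_{\gamma_i}, \tau_{\gamma_i})_{i\geq 2}$, which give a global smooth parametrization of $\mathcal{T}(S)$.

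Next, I will consider the one-parameter family $m_t \in \mathcal{T}(S)$, $t \in (0, \ell_0]$, obtained by setting $\ell_\gamma(m_t) = t$ and keeping all other Fenchel-Nielsen coordinates constant. As $t \to 0^+$, the geodesic representative of $\gamma$ is pinched, and $m_t$ leaves every compact set in $\mathcal{T}(S)$; in particular $\{m_{t_n}\}$ has no limit in $\mathcal{T}(S)$ for any $t_n \to 0^+$. To show that $\{m_{t_n}\}$ is nonetheless Cauchy, I will invoke Wolpert's asymptotic expansion of the Weil-Petersson metric in Fenchel-Nielsen coordinates near the stratum $\{\ell_\gamma = 0\}$: the coefficient $g_{WP}\!\left(\tfrac{\partial}{\partial \ell_\gamma}, \tfrac{\partial}{\partial \ell_\gamma}\right)$ is bounded above by a constant times $1/\ell_\gamma$ as $\ell_\gamma \to 0$. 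Consequently the Weil-Petersson length of the path $t \mapsto m_t$ is bounded by
\begin{equation*}
\int_0^{\ell_0} \sqrt{g_{WP}\!\left(\tfrac{\partial}{\partial \ell_\gamma}, \tfrac{\partial}{\partial \ell_\gamma}\right)}\, d\ell_\gamma \; \leq \; C \int_0^{\ell_0} \frac{d\ell_\gamma}{\sqrt{\ell_\gamma}} \; < \; \infty.
\end{equation*}
Hence $\{m_{t_n}\}$ is Cauchy but does not converge in $\mathcal{T}(S)$, proving incompleteness.

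The main technical obstacle is the asymptotic estimate on $g_{WP}(\partial/\partial \ell_\gamma, \partial/\partial \ell_\gamma)$ near the pinching stratum. The cleanest route is via Wolpert's duality formula, which identifies $\partial/\partial \tau_\gamma$ with the Hamiltonian vector field of $\ell_\gamma$ with respect to the Weil-Petersson symplectic form, and his companion formula expressing the relevant WP pairings in terms of collar geometry around the short geodesic $\gamma$. Plumbing coordinates (or equivalently the Masur-type change of variables $u = \sqrt{\ell_\gamma}$) then convert the integrand into one that is manifestly integrable at $\ell_\gamma = 0$. An alternative, slightly less elementary, approach would be to invoke Masur's explicit description of the Weil-Petersson cometric in terms of plumbing parameters on a neighborhood of a noded Riemann surface in the Deligne-Mumford compactification, from which finite-length pinching paths fall out directly.
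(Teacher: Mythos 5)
The paper does not prove this statement at all: it is quoted as a known theorem of Wolpert \cite{Wo2} and used as a black box (its role in the paper is only to set up the contrast with Masur's description of the completion and, later, Corollary \ref{WP}). So there is nothing in the paper to compare your argument against line by line; what you have written is, in substance, the standard proof of Wolpert's theorem, and it is correct. Pinching a nonperipheral simple closed curve $\gamma$ inside a pants decomposition, holding the other Fenchel--Nielsen coordinates fixed, and bounding the length of the path by $\int_0^{\ell_0}\ell_\gamma^{-1/2}\,d\ell_\gamma<\infty$ via the estimate $g_{WP}(\partial_{\ell_\gamma},\partial_{\ell_\gamma})=O(1/\ell_\gamma)$ is exactly how this is done in the modern literature; the only point you should make explicit is that the constant $C$ in that estimate is uniform along your particular path, which holds because all other Fenchel--Nielsen coordinates (hence the geometry away from the collar of $\gamma$) are frozen. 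Two small remarks: Wolpert's original 1975 argument bounds the Weil--Petersson distance to the pinched locus more directly (in later language, $\ell_\gamma^{1/2}$ is uniformly Lipschitz for $d_{WP}$, so $d_{WP}(m_t,\partial\mathcal{T})\le C\,t^{1/2}$), which avoids invoking the full asymptotic expansion; and note that the statement as placed in the paper concerns a closed surface, where an essential nonperipheral $\gamma$ always exists, so your first step is not vacuous. It is also worth observing that your pinching path is the exact analogue, for the Weil--Petersson metric, of the degeneration path the paper constructs in the cone $\mathcal{C}$ for the pressure metric, so your argument fits naturally alongside the paper's strategy.
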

\begin{theorem}[Masur \cite{Ma1}]
	The completion of Weil-Petersson metric on $\mathcal{T}(S)$ is the augmented Teichm\"uller space equipped with the Weil-Petersson metric.
\end{theorem}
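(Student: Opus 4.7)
The plan is to prove the theorem in two steps corresponding to the two inclusions: (i) every noded surface $\sigma$ in the augmented Teichm\"uller space $\overline{\mathcal{T}}(S)$ is the limit in the WP metric of a sequence in $\mathcal{T}(S)$; and (ii) every WP-Cauchy sequence in $\mathcal{T}(S)$ that fails to converge in $\mathcal{T}(S)$ converges to some such $\sigma$. The central technical input is Wolpert's asymptotic expansion of the WP metric in Fenchel--Nielsen coordinates near a pinching stratum, together with his curve-length estimate controlling length functions by WP distance.

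For (i), fix a pants decomposition $\mathcal{P}$ of $S$ containing the disjoint curves $\gamma_1,\dots,\gamma_k$ that are pinched at $\sigma$, and use the Fenchel--Nielsen coordinates $(\ell_i,\tau_i)$. Wolpert's expansion shows that, as $\ell_j\to 0$ for $j\le k$, the WP metric is asymptotically a sum $\sum_{j\le k}\bigl(c\, d\ell_j^2/\ell_j + c'\ell_j^3\, d\tau_j^2\bigr)$ plus terms that remain bounded. After the change of variable $u_j=\sqrt{\ell_j}$, the first term becomes a smooth Euclidean form $4c\, du_j^2$, while the twist term $\ell_j^3\, d\tau_j^2=u_j^6\,d\tau_j^2$ degenerates. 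Integrating along the straight-line path in which each $\ell_j$ decreases from a positive value to $0$ with twists fixed yields a finite WP length, so the constructed path converges in the completion to $\sigma$.

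For (ii), let $(m_n)$ be WP-Cauchy. Wolpert's inequality $|\sqrt{\ell_{m_n}(\gamma)}-\sqrt{\ell_{m_p}(\gamma)}|\le C\, d_{WP}(m_n,m_p)$ applied to every simple closed curve $\gamma$ shows that each length function has a limit. Let $\Gamma$ be the set of curves whose limiting length is $0$; standard collar and intersection arguments force $\Gamma$ to be a disjoint union of simple closed curves. Away from $\Gamma$, Mumford compactness (applied to the pieces of $S\setminus\Gamma$) allows us to pass to a subsequence whose restriction to $S\setminus\Gamma$ converges in the Teichm\"uller space of each component, producing a limiting noded surface $\sigma\in\overline{\mathcal{T}}(S)$. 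The twist parameters along curves in $\Gamma$ become irrelevant in the limit because, once $\ell_j$ is small, the $u_j^6\, d\tau_j^2$ contribution to WP length is negligible, so different Dehn-twist representatives of the tail produce the same point $\sigma$.

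The main obstacle is verifying that this limit point is canonical and that the WP metric extends continuously across every stratum, so that $\overline{\mathcal{T}}(S)$ with its stratified WP metric really is the metric completion (and not merely a set-theoretic candidate). Concretely, one must check independence of the pants decomposition $\mathcal{P}$ containing $\Gamma$, and must upgrade the convergence of length and twist data to convergence of equivalence classes of marked noded surfaces. Both rely on Wolpert's finer analysis of the Hessian of length functions and of the symplectic structure near the boundary strata, plumbing coordinates matching Fenchel--Nielsen data to nodal limits. Once this compatibility is in hand, parts (i) and (ii) combine to identify $\overline{\mathcal{T}}(S)$ as the metric completion of $(\mathcal{T}(S),g_{WP})$.
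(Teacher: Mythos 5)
The paper does not prove this statement: it is quoted as a theorem of Masur \cite{Ma1} and used as a black box, so there is no internal proof to compare against. Judged on its own terms, your outline follows the now-standard route through Wolpert's asymptotic expansion $g_{WP}\sim\sum_j\bigl(c\,d\ell_j^2/\ell_j+c'\ell_j^3\,d\tau_j^2\bigr)$ and the Lipschitz estimate for $\sqrt{\ell_\gamma}$. That is genuinely different from Masur's original argument, which predates these expansions and instead compares the WP metric to a model metric in plumbing coordinates by estimating the degeneration of holomorphic quadratic differentials and the period matrix along a degenerating family. Your route gives a cleaner real-analytic picture of the metric near the strata (and yields incompleteness and finiteness of pinching paths almost for free); Masur's gives the compatibility with the Deligne--Mumford compactification that makes the boundary strata identifiable as lower-dimensional Teichm\"uller spaces.

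As written, however, the proposal is not a proof. The decisive step --- that the limit point is canonical, that the metric extends continuously across every stratum, that the metric-completion topology agrees with the augmented topology, and that distinct points of the augmented space are at positive WP distance (so the completion is not some further quotient) --- is exactly the content of Masur's theorem, and you explicitly defer it (``once this compatibility is in hand\dots''). Two concrete places where the deferral matters: in (ii), convergence of all length functions together with Mumford compactness on $S\setminus\Gamma$ gives convergence of the underlying hyperbolic structures but not yet of the markings modulo twists along $\Gamma$, and one must still rule out that two distinct noded surfaces end up at distance zero; in (i), one must check that the ``bounded remainder'' in Wolpert's expansion is uniformly controlled along the chosen path, which requires keeping the non-pinched Fenchel--Nielsen coordinates in a compact set. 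The architecture is right, but the theorem's actual content lives in the step you have postponed.
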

In the augmented Teichm\"uller space, the parts added to the boundary of $\mathcal{T}(S)$ are Teichm\"uller spaces of pinched surfaces obtained by pinching certain number of pairwise disjoint simple closed geodesics on $S$. In particular, the last level of completion is given by pinching a maximal number of geodesics on $S$. In this case, the geodesics where the pinching happens form a pair of pants decomposition of $S$. After pinching all these curves, we obtain a disjoint union of three punctured spheres whose Teichm\"uller space is a point. The following picture tells one way to fully pinch a genus $2$ surface:
\begin{center}
\includegraphics[scale=0.7]{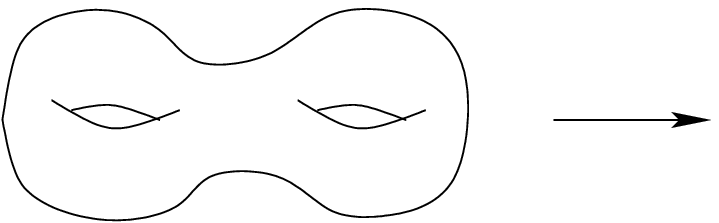}
\includegraphics[scale=0.7]{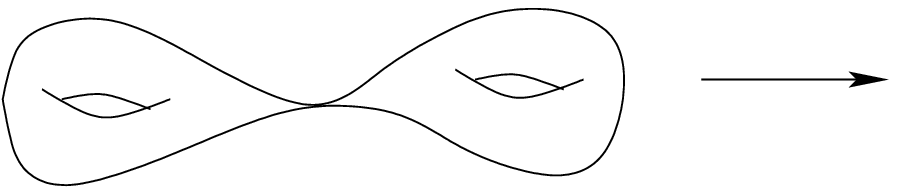}
\includegraphics[scale=0.7]{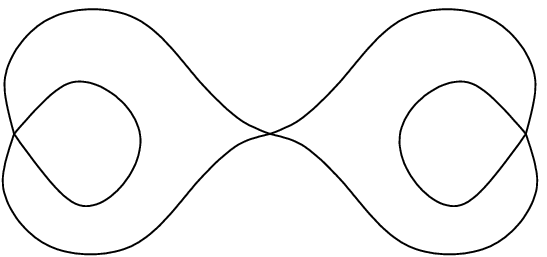}
\end{center}

For the bordered surface case, as far as we know, there is no canonical way to define the Weil-Petersson metric on $\mathcal{T}(S)$. In this paper we use the following definition. We consider the double of $S$ which is topologically a genus $(2g+r-1)$ surface $\mathrm{D}S$. Given a point in $\mathcal{T}(S)$, we double it by gluing to it the same marked hyperbolic surface with different orientation. Moreover we ask the twists at each gluing to be $0$. 
\begin{center}
 \includegraphics[scale=0.6]{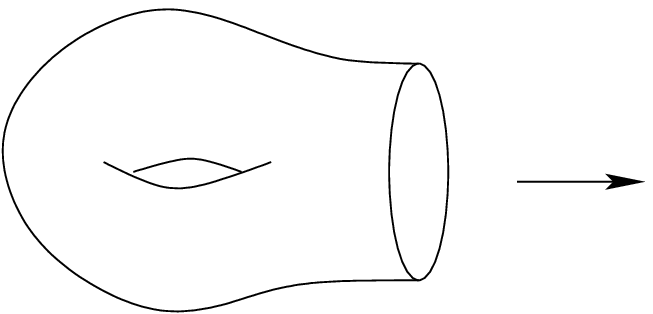}
 \includegraphics[scale=0.6]{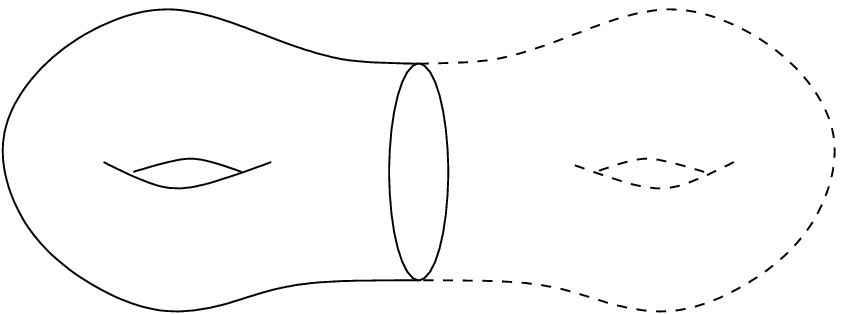}
\end{center}
In this way we found a point in $\mathcal{T}(\mathrm{D}S)$ which moreover induces a embedding of $\mathcal{T}(S)$ into $\mathcal{T}(\mathrm{D}S)$. 

\begin{definition}
 The \textbf{Weil-Petersson metric} on $\mathcal{T}(S)$ is given by pulling back the Weil-Petersson metric on $\mathcal{T}(\mathrm{D}S)$ by the embedding map.
\end{definition}

\subsection{"New" coordinate system on $\mathcal{T}(S)$}
In the following, we will give a "new" coordinates system, constructed using orthogeodesics on a bordered surface $S$. The idea of using orthogeodesics to construct coordinates system exists already in several earlier papers (see for example \cite{luo}, \cite{Ren}, \cite{Ush}). We construct the coordinates system in a different way. The reason for making such a modification is that using this new construction it will be easy to describe the path along which we study the incompleteness of pressure metric and to do the estimations.

\begin{definition}
An \textbf{orthogeodesic} $\alpha$ on $S$ is a geodesic arc hitting the boundary perpendicularly at both ends. Its length $l(\alpha)$ is an \textbf{element of orthospectrum} of $S$ and we refer to it as an \textbf{ortholength}. An orthogeodesic is said to be \textbf{simple} if it has no self-intersection.
\end{definition}

\begin{example}
The following picture shows two orthogeodesics on surface $S_{2,2}$. The orthogeodesic $\alpha$ has $1$ self-intersection, while the other one $\beta$ is simple.
\begin{center}
 \includegraphics[scale=0.8]{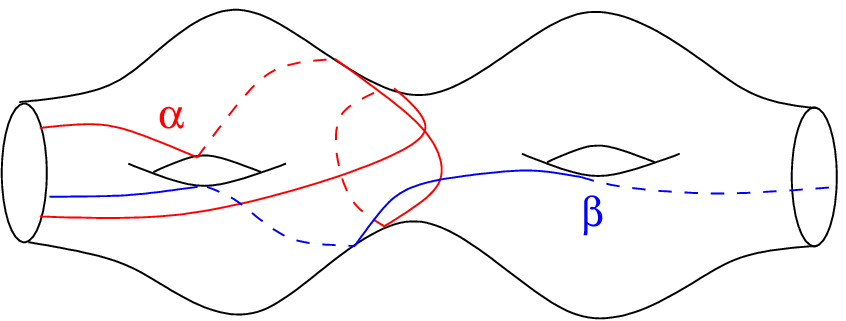}
\end{center}
\end{example}

\begin{remark}
These definitions were introduced by Basmajian in \cite{Bas} for a finite volume hyperbolic manifold with totally geodesic boundary. There he proved an identity relating the orthospectrum of a hyperbolic manifold to the volume of its boundary, which is now called Basmajian identity. \QEDclosed
\end{remark}

\begin{definition}
 An \textbf{ideal triangulation} $T$ for $S$ is a maximal collection of simple pairwise disjoint orthogeodesics of $S$.
\end{definition}

\begin{remark}
 This is a definition extended from the usual one for a punctured surface. The complement of $T$ in $S$ is, instead of a union of ideal triangles, a union of right-angled hexagons. Notice that when the lengths of all boundary components become $0$, all boundary components become punctures. The orthogeodesics become complete geodesics. The ideal triangulation become the usual one for a puncture surface. \QEDclosed
\end{remark}

The number of arcs contained in an ideal triangulation of $S$ only depends on the topology of $S$. It equals to $-3\chi(S)$ where $\chi(S)=2-2g-r$ is the Euler characteristic of $S$. The number of right-angled hexagons is then equal to $-2\chi(S)$. To simplify the notation, we set $s=-\chi(S)$.
\bigskip

We first show that there exists a special ideal triangulation for each bordered surface $S$ which we will use later in our construction.

\begin{lemma}\label{nonadjacent}
For each bordered surface, there exists an ideal triangulation such that we can choose half number of the right-angled hexagons in its complement which are pairwise non-adjacent.
\end{lemma}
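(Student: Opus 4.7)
The plan is to rephrase the lemma combinatorially using fat graphs and then build the required triangulation by induction on $s = -\chi(S)$.

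First, the reformulation. Any ideal triangulation $T$ of $S$ has a dual fat graph $\mathbb{G}_T$ with one vertex per right-angled hexagon and one edge per arc of $T$; since each hexagon has exactly three arc-sides, $\mathbb{G}_T$ is $3$-regular on $2s$ vertices, and two hexagons are adjacent in the sense of the lemma exactly when the corresponding vertices of $\mathbb{G}_T$ are joined by an edge. I claim that requiring an independent set of size $s$ is equivalent to asking for $\mathbb{G}_T$ to be bipartite with each part of size $s$: a simple counting argument shows that an independent set of size $s$ in a $3$-regular graph on $2s$ vertices accounts for all $3s$ edges (each of its $s$ vertices contributes $3$ edge-endpoints, all going outside), forcing the complementary $s$ vertices to also be independent. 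So the lemma reduces to exhibiting, for every $(g,r)$ with $r \geq 1$ and $s \geq 1$, a trivalent fat graph with bipartite underlying multigraph whose ribbon thickening is $S_{g,r}$.

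Next, the induction. The base cases $s = 1$ are $(g,r) = (0,3)$ and $(1,1)$; in both, the theta graph (two vertices joined by three parallel edges) is trivially bipartite and may be equipped with cyclic orders realizing either surface as its thickening. For the inductive step I introduce a local operation---\emph{bubbling}---which takes a bipartite trivalent fat graph $\mathbb{G}$, picks any edge $e = uw$, removes $e$, and inserts two new vertices $v_1, v_2$ joined by four new edges (one edge $uv_1$, a pair of parallel edges $v_1 v_2$, and one edge $v_2 w$). The two new vertices are trivalent, $s$ grows by $1$, and coloring $v_1$ with the color of $w$ and $v_2$ with the color of $u$ preserves bipartiteness. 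The crucial flexibility lies in the choice of cyclic order at $v_1, v_2$: with the two parallel edges arranged in the same rotational sense at both ends, the bubble is an untwisted strip and its interior is a new boundary component of the thickened surface, so $r$ grows by $1$; with opposite senses, the bubble becomes a half-twisted strip and the thickening acquires a new handle, so $g$ grows by $1$. Iterating either variant from the two base cases realizes every $S_{g,r}$ with $r \geq 1$.

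The main obstacle is verifying that these two variants of bubbling have the claimed topological effect on the ribbon thickening. This reduces to a local calculation with the face-tracing permutation $\alpha\sigma$ of the ribbon graph inside the bubble: the untwisted variant introduces a new length-$2$ cycle in $\alpha\sigma$ (a new bigon face, hence a new boundary circle), while the twisted variant merges two incident faces into one, and the Euler relation $\chi = 2 - 2g - r$ then forces the genus to increase. This is a straightforward but finicky computation; I would verify it once in the theta-graph base case (where it reduces to a handful of cycles) and then apply it uniformly in the induction. No other step presents real difficulty, since bipartiteness is preserved by the coloring rule and trivalence by construction.
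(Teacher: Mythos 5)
Your reduction to bipartiteness of the dual fat graph is correct and is essentially the same reformulation the paper makes (the paper phrases it as a proper two-colouring of the triangles and also proceeds by induction, though its local move is to subdivide a triangle by inserting a puncture or a one-holed torus rather than to modify an edge of the dual graph). The bubbling move itself is fine for bipartiteness and trivalence. The gap is in the topological bookkeeping of the two variants. A single bubbling adds $2$ vertices and $3$ edges, so it lowers $\chi=V-E$ by exactly $1$; since $\chi=2-2g-r$, the only possible effects on an oriented ribbon graph are $(g,r)\mapsto(g,r+1)$ or $(g,r)\mapsto(g+1,r-1)$. Your claim that the twisted variant simply "acquires a new handle, so $g$ grows by $1$" is therefore inconsistent with Euler characteristic unless you also record that $r$ drops by $1$, and "iterating either variant" does not obviously reach every $S_{g,r}$ until you exhibit a path through the $(g,r)$-lattice using the moves $(g,r+1)$ and $(g+1,r-1)$ (for instance $S_{0,3}\to S_{0,2g+r}\to S_{g,r}$). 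Also, since the surface is oriented there are no half-twisted bands; the two variants differ only in the cyclic orders at $v_1,v_2$, and the effect must be read off from the face permutation, not from a M\"obius-band picture.

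The second, more substantive gap: the face-merging effect of the twisted variant is not automatic. Tracing the face permutation shows that the twisted bubble merges the two faces adjacent to the sides of the chosen edge $e=uw$ \emph{only when those two sides lie on distinct faces}; if both sides of $e$ lie on the same boundary face, the same cyclic orders split that face into two, giving $(g,r)\mapsto(g,r+1)$ again rather than a genus increase. So your induction must choose $e$ with two distinct incident faces, and you must prove such an edge exists at every genus-increasing step. This is true whenever the current graph has $r\ge 2$ faces (if every edge had both sides on one face, connectivity would force a single face), and the path through the lattice suggested above keeps $r\ge 2$ at every twisted step, so the argument can be repaired; but as written the key topological claim is asserted for an arbitrary edge and is false in that generality. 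By contrast, the paper's move of triangulating a once-punctured triangle or an inserted one-holed torus is purely local inside a disc, so no global hypothesis on the chosen triangle is needed.
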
 
\begin{remark}
	Let $T$ be any ideal triangulation of $S$. Let us consider the dual graph $\mathbb{G}$ of $T$. In graph theory, an \textit{independent set} of a graph is a subset of vertices such that they are pairwise non-adjacent. The statement is then equivalent to the following one: there exists an ideal triangulation $T$, such that its dual graph $\mathbb{G}$ has one independent set consisting of half number of its vertices. 
\end{remark}
\begin{proof}
	To simplify the proof, all discussion will be about punctured surfaces.
	
	The proof is by induction. We first consider the case where the surface is either a once-punctured torus or a three-puncture sphere. These are the trivial cases, because there are only two ideal triangles in the complement of each ideal triangulation.
	
	Recall that any punctured surface $S_{g}^r$ of genus $g\ge1$ with $r$ punctures can be obtained by adding $r-1$ punctures to the torus and then taking its connected sum with $g-1$ torus. And an $r$-puncture sphere with $r\ge3$ can be obtained by adding $r-3$ punctures to the three-puncture sphere. Each step in the induction is given by adding a puncture or a torus to the surface.
	
	We claim that if a punctured surface $S_{g}^r$ has an ideal triangulation with the property in the lemma, so does $S_{g+1}^r$ and $S_{g}^{r+1}$.
	
	First, let us look at the case of adding one puncture. Suppose that the lemma is true for $S_{g}^r$. Then we can find an ideal triangulation $T$ and color each ideal triangle in $S\setminus T$ with either white or black such that two adjacent ideal triangle have different colors. Without loss of generality, we can assume the new puncture is added to the interior of one white ideal triangle. Then the triangulation of this once-punctured triangle together with $T$ will give an ideal triangulation $T'$ of $S_{g}^{r+1}$. The following picture provides the ideal triangulation of once-punctured triangle such that $T'$ is the one to show that the lemma is true for $S_{g}^{r+1}$.
	\begin{center}
	 \includegraphics[scale=0.8]{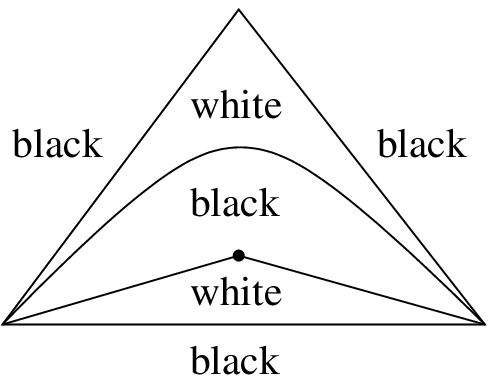}
	\end{center}
	
	The argument for $S_{g+1}^r$ is similar. We can assume that the torus is glued to the interior of a white triangle. The triangle and the torus form $\mathbb{T}$ a one-holed torus with three punctures on its boundary. Its ideal triangulation together with $T$ induces an ideal triangulation of $S_{g+1}^r$. The following picture provides the ideal triangulation of $\mathbb{T}$ such that $T'$ is the one to show that the lemma is true for $S_{g+1}^{r}$.
	
	\begin{center}
	 \includegraphics[scale=0.8]{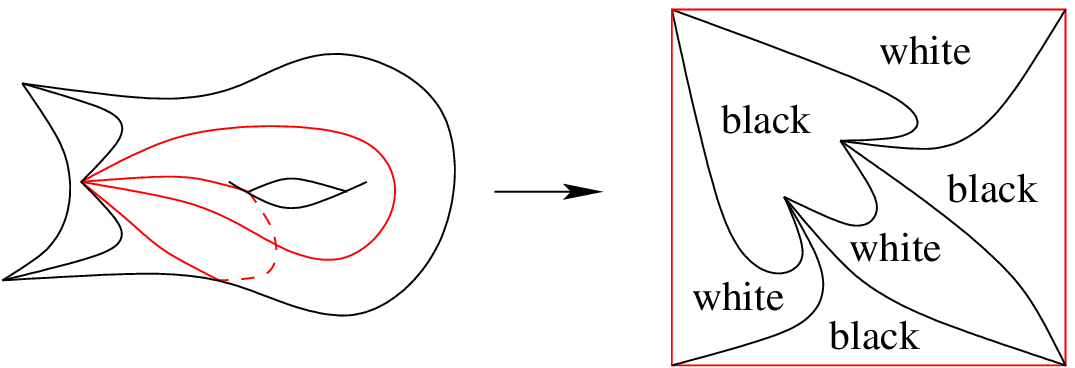}
	\end{center}
	
\end{proof}

Let $T$ denote the ideal triangulation of $S$ satisfying the property stated in the above lemma and let $\mathbb{G}$ denote its dual graph. We fix $T$ and $\mathbb{G}$ for the rest part of this paper. Denote by $\alpha_1,...,\alpha_{3s}$ the orthogeodesics in $T$. Their end points separate the boundary into $6s$ segments, denoted by $a_1,\dots,a_{6s}$. Then the following map is well defined:
\begin{equation*}
 \mathbb{O}_T: \mathcal{T}(S)\rightarrow (\mathbb{R}^+)^{6s},
\end{equation*}
where the image of $m$ is the $m$-lengths of $a_j$'s. Then we can show that:
\begin{theorem}
The map $\mathbb{O}_T$ defines an homeomorphism of $\mathcal{T}(S)$ to its image and induces a coordinates system of $\mathcal{T}(S)$.
\end{theorem}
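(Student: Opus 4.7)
The plan is to construct an explicit continuous inverse to $\mathbb{O}_T$ on its image by reassembling $S$ from the pieces of the hexagon decomposition induced by $T$. Recall that the complement $S\setminus T$ consists of $2s$ right-angled hyperbolic hexagons, each carrying three sides among the boundary arcs $a_1,\ldots,a_{6s}$ and three sides on the orthogeodesics $\alpha_1,\ldots,\alpha_{3s}$, alternating around the hexagon.

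Continuity of $\mathbb{O}_T$ itself is immediate, since the length of a geodesic arc with prescribed endpoints on $\partial S$ depends continuously (indeed real-analytically) on $m\in\mathcal{T}(S)$. For the inverse, I would invoke the classical rigidity of right-angled hexagons: given any triple $(x_1,x_2,x_3)\in(\mathbb{R}^+)^3$, there exists a unique right-angled hyperbolic hexagon (up to isometry) with three alternating sides of lengths $x_1,x_2,x_3$, and the lengths $(y_1,y_2,y_3)$ of the remaining alternating sides are given by the real-analytic formula $\cosh(y_i)=(\cosh(x_j)\cosh(x_k)+\cosh(x_i))/(\sinh(x_j)\sinh(x_k))$ for $\{i,j,k\}=\{1,2,3\}$. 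Applied hexagon by hexagon, each $6s$-tuple $(a_1,\ldots,a_{6s})$ in the image of $\mathbb{O}_T$ recovers each of the $2s$ hexagons isometrically, and computes, from each of the two hexagons adjacent to $\alpha_i$, a length for $\alpha_i$.

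The final step is to glue these hexagons back along the orthogeodesics in the combinatorial pattern prescribed by $T$ (equivalently by its dual fat graph $\mathbb{G}$). The gluing is rigid, with no twist freedom: each endpoint of $\alpha_i$ already lies at a fixed intersection of two neighboring boundary arcs, so the two copies of $\alpha_i$ must be identified by the unique orientation-reversing isometry matching these prescribed endpoints. This defines a map $\Phi$ on the image of $\mathbb{O}_T$, continuous by the real-analytic dependence in the hexagon formula, with $\Phi\circ\mathbb{O}_T=\mathrm{id}$ and $\mathbb{O}_T\circ\Phi=\mathrm{id}$ by construction.

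The main subtlety, and the only real obstacle I expect to require care, is verifying that the two lengths computed for each $\alpha_i$ from its two adjacent hexagons always coincide so that the gluing is consistent; this holds tautologically on the image of $\mathbb{O}_T$ (since the $a_j$'s come from a genuine hyperbolic metric $m$) and, as a byproduct, cuts out the image as a real-analytic submanifold of $(\mathbb{R}^+)^{6s}$ of dimension $6s-3s=3s=\dim\mathcal{T}(S)$ via one matching equation per orthogeodesic, confirming that $\mathbb{O}_T$ yields a genuine coordinate system.
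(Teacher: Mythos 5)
Your argument is essentially correct, but it takes a genuinely different route from the paper's. You keep all $6s$ boundary lengths, reconstruct each hexagon from its three alternating boundary sides via the right-angled hexagon cosine rule, and characterize the image by the $3s$ matching conditions (the two lengths computed for each $\alpha_i$ from its two adjacent hexagons must agree); the inverse is then the rigid regluing. Notably, your proof works for \emph{any} ideal triangulation $T$. The paper instead exploits the special triangulation of Lemma \ref{nonadjacent}: since the hexagons $H_1,\dots,H_s$ are pairwise non-adjacent, each arc of $T$ is a side of exactly one $H_i$, so the $3s$ lengths $(b_1,\dots,b_{3s})$ of the boundary sides of the $H_i$ alone determine all the $H_i$, hence all ortholengths, hence all the $H_j^c$; this makes the composition $\pi\circ\mathbb{O}_T$ a bijection onto the whole of $(\mathbb{R}^+)^{3s}$ and exhibits an explicit global chart with no constraint equations. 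What your approach buys is generality and an explicit description of $\mathrm{Im}(\mathbb{O}_T)$ as the locus of consistent gluings; what the paper's buys is an honest unconstrained coordinate system, which is what it actually uses later (the cone $\mathcal{C}$ and the degenerating rays $(\lambda b_1,\dots,\lambda b_{3s})$ are defined in the $b_i$-coordinates). One small point in your write-up deserves care: the claim that the matching equations cut out a real-analytic \emph{submanifold} of dimension $3s$ requires checking that their differentials are independent along the zero locus, which you assert but do not verify; it is not needed for the homeomorphism statement itself, since injectivity, continuity, and continuity of your $\Phi$ already give that, but as stated the "coordinate system" conclusion leans on it.
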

\begin{proof}
The proof is similar to the one in \cite{Farb} where a similar result was proved for the Teichm\"uller space of the pair of pants.

Cutting $S$ along $T$, we obtain $2s$ right-angled hexagons. Each of these hexagons has three alternative sides lying on the boundary of $S$. By the choice of $T$, we can choose half number of the right-angled hexagons in $S\setminus T$ which are pairwise non-adjacent. We denote them by $H_1,...,H_s$. We consider union of their sides lying on the boundary of $S$, and denote them by $b_1,...,b_{3s}$. Other right-angled hexagons will be denoted by $H^c_1,\dots,H^c_s$. We denote by $\{b^c_1,\dots,b^c_{3s}\}$ their sides lying on the boundary of $S$. 

Let $\pi$ be the restriction of $\mathbb{R}^{6s}$ to $\mathbb{R}^{3s}$ by forgetting the lengths of $b^c_i$'s. We first show that the map $\pi\circ\mathbb{O}_T$ is bijective. Recall that the hyperbolic structure on a right-angled hexagon $H$ is determined by three of its sides lengths and the Teichm\"uller space $\mathcal{T}(H)$ of $H$ is homeomorphic to $(\mathbb{R}^{+})^3$. Given any $3s$-tuple $(b_1,\dots,b_{3s})$, it first determines the hyperbolic structures on $H_i$'s. Since all arcs in the ideal triangulation $T$ appear as sides of $H_i$'s, the $3s$-tuple $(b_1,\dots,b_{3s})$ also determine the lengths of $\alpha_i$'s, and therefore the hyperbolic structures on $H^c_i$'s. This means that a $3s$-tuple $(b_1,\dots,b_{3s})$ can determine the hyperbolic structure on $S$. Since the $3s$-tuple $(b_1,\dots,b_{3s})$ can be chosen arbitrarily, the map $\pi\circ\mathbb{O}_T$ is bijective.

The map $\mathbb{O}_T$ is a homeomorphism to its image. Because if two points are close in $\mathrm{Im}(\mathbb{O}_T)$, for each right-angled hexagon, the hyperbolic structures are nearly isometric. This is equivalent to say that the corresponding points of $\mathcal{T}(S)$ are close with respect to the topology given by identifying $\mathcal{T}(S)$ with the space of conjugacy classes of discrete faithful representations of $\pi_1(S)$ into ${\rm PSL}(2,\mathbb{R})$.

Therefore $\pi\circ\mathbb{O}_T$ is a homeomorphism and gives a coordinates system of $\mathcal{T}(S)$.
\end{proof} 

\section{The pressure metric on $\mathcal{T}(S)$}\label{S}
In this section, we recall the construction of the pressure metric on $\mathcal{T}(S)$ which is a special case of the result in \cite{BCLS}.

\subsection{Subshift space of finite type associated to $S$}
Let $\mathrm{UT}{S}$ denote the unit tangent bundle of $S$ consisting of all unit tangent vector in the tangent bundle of $S$. Let $\phi^t$ denote the geodesic flow on $\mathrm{UT}{S}$. Let $\mathrm{NW}S$ denote the non-wandering part of $\mathrm{UT}{S}$ consisting of all unit tangent vectors tangent to geodesics never hitting the boundary transversely at both ends. We can associate a subshift space of finite type to $\mathrm{NW}S$. 

The first step is to take a finite partition of $\mathrm{NW}S$. We consider the ideal triangulation $T=\{\alpha_1,...,\alpha_{3s}\}$ of $S$ chosen above. Among these arcs, we can choose a subset $\{\alpha'_1,...,\alpha'_{s+1}\}$ such that we obtain a topological disk by cutting $S$ along these arcs. The partition is given by cutting $\mathrm{NW}S$ along the fiber over $\alpha_i'$'s. 

Under this partition, each orbit in $\mathrm{NW}S$ is cut into oriented segments. The starting points of these oriented segments have $2s+2$ types corresponding to the two sides of $\{\alpha_1',...,\alpha'_{s+1}\}$. We can denote by $I=\{\alpha_1'^{\pm},...,\alpha_{s+1}'^{\pm}\}$. We associate to each oriented segment the type of its starting point, and call it the \textbf{\textit{symbol}} of this segment. Thus each orbit can be determined by a bi-infinite word whose letters are those symbols. We can label the letters in a word using integers respecting the orientation of this orbit. Each labeling is determined by its $0$-position segment. We call a labeled bi-infinite word a \textbf{\textit{state}}. It is clear that there is a one to one correspondence between states and orbit segments. If two segments are contained in a same orbit, then their corresponding states have the same bi-infinite word while their labelings of letters are different by an action of a translation of $\mathbb{Z}$.

We denote the space of all states by:
\begin{equation}
 X=\{x=(x_n)_{n\in\mathbb{Z}}\mid
 \forall n\in\mathbb{Z},\,\,x_n\in I,\,\,x_n\neq x_{n+1}^{-1}\}.
\end{equation}
It is a \textbf{\textit{subshift space of finite type}} associated with a \textbf{\textit{shift operator}} $\sigma$ defined by:
\begin{equation}
 \sigma(x)=y,
\end{equation}
such that $y_n=x_{n+1}$.

The space $X$ can be equipped with the Tychonoff topology such that $X$ is compact. The dynamical system $(X,\sigma,\mathbb{Z})$ is topologically conjugate to the geodesic flow on $\mathrm{NW}S$. The action of $\sigma$ on $X$ is mixing.

\begin{remark}
 One should think $(X,\sigma,\mathbb{Z})$ as the discrete version of $(\mathrm{NW}S,\phi^t,\mathbb{R})$.
\end{remark}

\subsection{Thermodynamic formalism}
Let $C(X,\mathbb{R})$ be the space of H\"older continuous functions on $X$ taking value in $\mathbb{R}$. 

\begin{definition}
 Two H\"older continuous functions $F_1$ and $F_2$ on $X$ are said to be \textbf{Liv{\v s}ic cohomologous} if there exists a continuous function $F_3$ on $X$ such that: $F_1-F_2=F_3-F_3\circ\sigma$. 
\end{definition}

Let $x\in X$ be contained in a periodic orbit. The minimal strictly positive integer $p$ such that $\sigma^p(x)=x$ is called the \textbf{\textit{period}} of the orbit of $x$. The \textbf{\textit{$F$-period}} of this orbit for a function $F\in C(X,\mathbb{R})$ is defined to be the following quantity:
\begin{equation}
	F(x)+F(\sigma(x))\cdots+F(\sigma^{p-1}(x)).
\end{equation}

\begin{proposition}[Liv{\v s}ic \cite{Liv}]\label{livi}
 Two function $F_1$ and $F_2$ are Liv{\v s}ic cohomologous if and only if for each periodic orbit in $X$, its $F_1$-period equals to its $F_2$-period.
\end{proposition}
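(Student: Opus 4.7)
The plan is to prove both implications separately, with the forward direction being essentially immediate and the converse being the substantive content (classical Livšic).

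For the forward direction, suppose $F_1 - F_2 = F_3 - F_3 \circ \sigma$ for some continuous $F_3$. Let $x \in X$ be a periodic point of period $p$, so $\sigma^p(x) = x$. Summing the identity along the orbit gives
\begin{equation*}
\sum_{k=0}^{p-1} \bigl(F_1 - F_2\bigr)\bigl(\sigma^k(x)\bigr) = \sum_{k=0}^{p-1} \bigl(F_3(\sigma^k(x)) - F_3(\sigma^{k+1}(x))\bigr) = F_3(x) - F_3(\sigma^p(x)) = 0,
\end{equation*}
so the $F_1$-period and the $F_2$-period agree. This step uses nothing beyond continuity.

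For the converse, set $F := F_1 - F_2$; the hypothesis is that the $F$-period of every periodic orbit vanishes, and the goal is to produce a continuous (in fact Hölder) $F_3$ with $F = F_3 - F_3 \circ \sigma$. The construction is to fix a point $y \in X$ whose forward orbit $\{\sigma^n(y)\}_{n \geq 0}$ is dense in $X$ (such a $y$ exists because $\sigma$ is mixing, hence topologically transitive) and to define, on this dense orbit,
\begin{equation*}
F_3(\sigma^n(y)) := -\sum_{k=0}^{n-1} F(\sigma^k(y)), \qquad n \geq 0,
\end{equation*}
so that $F(\sigma^n(y)) = F_3(\sigma^n(y)) - F_3(\sigma^{n+1}(y))$ holds tautologically along the orbit of $y$. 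The entire proof then reduces to showing that $F_3$ is uniformly Hölder continuous on this dense subset, after which it extends uniquely to a continuous function on the compact space $X$ satisfying $F = F_3 - F_3 \circ \sigma$ by continuity of both sides.

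The core estimate, which is where I expect the only real difficulty, is the following. Suppose $\sigma^m(y)$ and $\sigma^n(y)$ are close in $X$, meaning they agree in a long central block of coordinates, say $x_i = x'_i$ for $|i| \leq N$. Because $X$ is a subshift of finite type and $\sigma$ is mixing, the Anosov closing lemma (shadowing) produces a periodic point $z$ of period $|n-m|$ whose orbit stays exponentially close to the orbit segment $\sigma^m(y), \sigma^{m+1}(y), \ldots, \sigma^{n-1}(y)$. The hypothesis forces the $F$-period of $z$ to vanish, so
\begin{equation*}
F_3(\sigma^n(y)) - F_3(\sigma^m(y)) = -\sum_{k=m}^{n-1} F(\sigma^k(y)) = -\sum_{k=m}^{n-1} \bigl(F(\sigma^k(y)) - F(\sigma^{k-m}(z))\bigr),
\end{equation*}
and Hölder continuity of $F$ combined with the exponential shadowing gives a geometric sum that is bounded by a constant multiple of $d(\sigma^m(y), \sigma^n(y))^\alpha$, with $\alpha$ the Hölder exponent of $F$. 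This is the uniform Hölder bound on the dense orbit, and the main technical point is to verify the closing/shadowing input carefully in the subshift setting and to track the decay so that the sum converges independently of $n - m$.

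Finally, having extended $F_3$ to all of $X$ by uniform continuity, both sides of $F(x) = F_3(x) - F_3(\sigma(x))$ are continuous and agree on the dense orbit of $y$, hence on all of $X$, completing the proof.
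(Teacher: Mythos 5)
The paper offers no proof of this proposition: it is quoted as a classical result of Liv{\v s}ic with a citation, so there is nothing internal to compare against. Your argument is the standard proof of the discrete Liv{\v s}ic theorem and is correct in outline: the forward direction is the telescoping computation you give, and the converse is the construction of the transfer function along a dense orbit together with a H\"older estimate obtained from periodic-orbit shadowing. Two points deserve to be made concrete rather than left to ``the closing lemma.'' First, in the subshift setting the shadowing is completely elementary and you should just exhibit it: if $\sigma^m(y)$ and $\sigma^n(y)$ agree in coordinates $|i|\le N$ with $N\ge 1$, then in particular $y_{n}=y_{m}$, so the transition from $y_{n-1}$ to $y_{m}$ is admissible (it coincides with the admissible transition $y_{n-1}\to y_{n}$), and the periodic concatenation $z=(y_m y_{m+1}\cdots y_{n-1})^{\infty}$ lies in $X$, satisfies $\sigma^{n-m}(z)=z$, and obeys $d(\sigma^{j}(z),\sigma^{m+j}(y))\le C\,\theta^{\,N+\min(j,\,n-m-j)}$; summing $\|F\|_{\alpha}$ times the $\alpha$-th power of this over $j$ gives a geometric bound of order $\theta^{\alpha N}\asymp d(\sigma^m(y),\sigma^n(y))^{\alpha}$, uniformly in $n-m$. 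Second, the point $z$ so produced need not have \emph{minimal} period $n-m$; you should note that the hypothesis on periodic orbits still forces $\sum_{j=0}^{n-m-1}F(\sigma^{j}(z))=0$, since this sum is $(n-m)/p$ times the $F$-period of the orbit of $z$, where $p$ is the minimal period dividing $n-m$. With these two details filled in, the extension of $F_3$ by uniform continuity and the verification of the cohomological identity on all of $X$ go through exactly as you say.
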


\begin{definition}
The \textbf{pressure} $P(F)$ of $F$ is defined by:
\begin{equation}\label{pressure}
 P(F):=\limsup_{n\rightarrow\infty}\frac{1}{n}\log\left(\sum_{\sigma^n(x)=x}e^{F(x)+F(\sigma(x)) \cdots+F(\sigma^{n-1}(x))}\right).
\end{equation}

The map $P:C(X,\mathbb{R})\rightarrow\mathbb{R}$ sending each $F$ to its pressure is called the \textbf{pressure function}.
\end{definition}

There is another way to define the pressure of a H\"older continuous function.

Given a $\sigma$-invariant probability measure $\mu$ on $X$. Let $\gamma$ be a $\mu$-measurable finite partition of $X$. Denote by
\begin{equation*}
\bigvee_{i=0}^{n+1}(\sigma)^i\gamma
\end{equation*}
the sigma algebra generated by the collection of $(\sigma)^i\gamma$ for $0\le i\le(n+1)$. We define the following quantity:
\begin{equation*}
H(\sigma,\mu,\gamma):=-\lim_{n\rightarrow+\infty}\frac{1}{n}\left(\sum_{A\in\bigvee_{i=0}^{n+1}(\sigma)^i\gamma}\mu(A){\rm log}\,\mu(A)\right).
\end{equation*}

\begin{definition}
	The entropy of $\sigma$ with respect to $\mu$ is defined by
	\begin{equation*}
	h_\mu(\sigma)=\sup_{\gamma}H(\sigma,\mu,\gamma).
	\end{equation*}
\end{definition}

Denote by $M^\sigma$ the space of all $\sigma$-invariant probability measures on $X$. Then the pressure of a H\"older continuous function $F$ can be given by the following formula:
\begin{equation}
	P(F):=\sup_{\mu\in M^\sigma}\{h_\mu(\sigma)+\int_X F\mathrm{d}\mu\}
\end{equation}

In particular, the supreme can be achieved and the measure such that
\begin{equation}
	P(F)=h_\mu(\sigma)+\int_X F\mathrm{d}\mu
\end{equation} 
is called the \textbf{\textit{equilibrium state}} for $F$. We denote it by $\mu_F$.

\begin{remark}
	The pressure only depends on the Liv{\v s}ic cohomology class of $F$.
	
	Each H\"older continuous functions $F$ has a unique equilibrium state $\mu_F$. Two H\"older continuous functions $F_1$ and $F_2$ have a same equilibrium state if and only if $F_1$ is Liv{\v s}ic cohomologous to $F_2+c$ where $c$ is a constant function on $X$. \QEDclosed
\end{remark}

\begin{proposition}[Parry-Pollicott \cite{PaPo}]
Let $F_1$ and $F_2$ be two H\"older continuous functions on $X$. Let $\mu_1$ denote the equilibrium state of $F_1$. Then,
	\begin{enumerate}
		\item The pressure function $P$ is analytic along the path $F_1+tF_2$,
		\item If $P(F_1)=0$, the derivative of the pressure function $P$
		along the path $F_1+tF_2$ at $t=0$ is given by:
		\begin{equation}
		\frac{{\rm d}P(F_1+tF_2)}{{\rm d}t}|_{t=0}=\int_X F_2\,{\rm d}\mu_1,
		\end{equation}
		\item If moreover $F_2$ satisfies that $\int_X F_2\,{\rm d}\mu_1=0$, then we have:
		\begin{equation}
		\frac{{\rm d}^2P(F_1+tF_2)}{{\rm d}t^2}|_{t=0}={\rm Var}(F_2,\mu_1),
		\end{equation}
		where $${\rm Var}(F_2,\mu_1):=\lim\limits_{n\rightarrow\infty}\frac{1}{n}		\int_X(F_2(x)+F_2(\sigma(x))+\cdots+F_2(\sigma^{n-1}(x)))^2\,{\rm d}\mu_1,$$
		Moreover ${\rm Var}(F_2,\mu_1)=0$ if and only if $F_2$ is Liv{\v s}ic cohomologous to $0$ function.
	\end{enumerate}
\end{proposition}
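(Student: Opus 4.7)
The plan is to route everything through the Ruelle transfer operator
$\mathcal{L}_F : g \mapsto \sum_{\sigma(y)=x} e^{F(y)} g(y)$
acting on H\"older functions on $X$. Because the shift $(X,\sigma)$ is mixing, the Ruelle--Perron--Frobenius theorem gives a simple, isolated, maximal eigenvalue $\lambda(F) > 0$ with a spectral gap, and one has the identification $P(F) = \log \lambda(F)$. The associated positive H\"older eigenfunction $h_F$ and the eigenmeasure $\nu_F$ of $\mathcal{L}_F^*$ combine to give the equilibrium state $\mu_F = h_F \nu_F$ (normalized so $\int h_F \, d\nu_F = 1$).

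For part (1), analyticity follows from analytic perturbation theory. The map $t \mapsto \mathcal{L}_{F_1 + tF_2}$ is an analytic family of bounded operators (expand $e^{tF_2}$ as a norm-convergent series), and the leading eigenvalue is simple and isolated, so Kato's theorem on isolated simple eigenvalues of analytic families produces an analytic branch $t \mapsto \lambda(F_1 + tF_2)$; then $P(F_1 + tF_2) = \log \lambda(F_1 + tF_2)$ is analytic near $0$.

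For parts (2) and (3), I would differentiate the eigenvalue equation $\mathcal{L}_{F_t} h_t = \lambda(F_t) h_t$, where $F_t = F_1 + tF_2$, and pair with $\nu_1$. Since $P(F_1)=0$ gives $\lambda(F_1)=1$ and $\mathcal{L}_{F_1}^* \nu_1 = \nu_1$, the $O(t)$ term yields
\begin{equation*}
\lambda'(0) = \int_X F_2 \, h_1 \, d\nu_1 = \int_X F_2 \, d\mu_1,
\end{equation*}
proving (2). Under the extra assumption $\int F_2 \, d\mu_1 = 0$, the first-order perturbation $h_1'$ can be solved via the resolvent of $\mathcal{L}_{F_1}$ on the complement of the leading eigenspace (where the spectral gap makes $(\mathrm{Id} - \mathcal{L}_{F_1})^{-1}$ bounded). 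Carrying the expansion to second order and regrouping the resulting sums produces exactly the Green--Kubo--type series that, after using $\sigma$-invariance of $\mu_1$, collapses to
\begin{equation*}
\lambda''(0) = \lim_{n\to\infty} \frac{1}{n}\int_X \Bigl(\sum_{k=0}^{n-1} F_2 \circ \sigma^k\Bigr)^{\!2} d\mu_1 = \mathrm{Var}(F_2, \mu_1),
\end{equation*}
giving (3).

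For the final dichotomy, one direction is immediate: if $F_2 = F_3 - F_3\circ\sigma$, then the Birkhoff sums $S_n F_2 = F_3 - F_3\circ\sigma^n$ are uniformly bounded, so $\mathrm{Var}(F_2,\mu_1) = 0$. Conversely, if $\mathrm{Var}(F_2,\mu_1) = 0$, then on every periodic orbit of period $p$ the Birkhoff sum $S_{kp}F_2$ grows like $k \cdot (S_p F_2)$, while the $L^2(\mu_1)$-asymptotics force $S_{kp}F_2 / \sqrt{kp}$ to be bounded; since the measure of maximal entropy charges every periodic orbit positively, this can only happen if every $F_2$-period vanishes. Then Liv\v{s}ic's theorem (Proposition~\ref{livi}) applied to $F_2$ and the zero function produces a continuous $F_3$ with $F_2 = F_3 - F_3\circ\sigma$. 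The main technical obstacle is the analytic perturbation in step~(1) together with the resolvent estimate that controls $h_1'$, both of which depend on setting up $\mathcal{L}_F$ on a Banach space of H\"older functions on which the spectral gap is effective; once that is in place, steps (2)--(3) are bookkeeping and step (4) is Liv\v{s}ic plus an elementary variance argument.
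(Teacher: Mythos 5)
The paper offers no proof of this proposition --- it is imported wholesale from Parry--Pollicott --- so there is nothing internal to compare against; your transfer-operator route (Ruelle--Perron--Frobenius for the mixing shift, Kato perturbation of the simple isolated maximal eigenvalue $\lambda(F)=e^{P(F)}$, then differentiating the eigenvalue equation against the eigenmeasure $\nu_1$) is precisely the standard proof in that reference. Parts (1)--(3) of your sketch are correct in outline: analyticity of $t\mapsto\lambda(F_1+tF_2)$ gives (1), pairing the first-order term with $\nu_1$ gives $\lambda'(0)=\int F_2h_1\,{\rm d}\nu_1=\int F_2\,{\rm d}\mu_1$, and the second-order term, solved through the resolvent on the complement of the top eigenspace, collapses to the Green--Kubo series for ${\rm Var}(F_2,\mu_1)$.

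The final dichotomy, however, contains a genuine gap. You deduce that every $F_2$-period vanishes from the $L^2(\mu_1)$-asymptotics of $S_nF_2$ ``since the measure of maximal entropy charges every periodic orbit positively.'' It does not: the relevant measure is the equilibrium state $\mu_1$ of $F_1$, not the measure of maximal entropy, and on a nontrivial mixing subshift every equilibrium state of a H\"older potential is non-atomic, so each periodic orbit has $\mu_1$-measure zero. An $L^2$ bound therefore says nothing about Birkhoff sums along any individual periodic orbit, and this step fails as written. The standard repair runs the other way around: vanishing variance plus exponential decay of correlations gives $\int(S_nF_2)^2\,{\rm d}\mu_1=n\,{\rm Var}(F_2,\mu_1)+O(1)=O(1)$, whence $F_2=u-u\circ\sigma$ with $u\in L^2(\mu_1)$ by the $L^2$ Gottschalk--Hedlund/Leonov argument. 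One then upgrades $u$ to H\"older using the normalized transfer operator $L$ (with $L1=1$, $L^*\mu_1=\mu_1$): the coboundary equation gives $u=L^nu-\sum_{k=1}^{n}L^kF_2$, and the spectral gap makes $L^nu\to\int u\,{\rm d}\mu_1$ and $\sum_{k\ge1}L^kF_2$ converge in H\"older norm (the latter because $\int F_2\,{\rm d}\mu_1=0$), so $u$ agrees $\mu_1$-a.e.\ with a H\"older function; since $\mu_1$ has full support and both sides of $F_2=u-u\circ\sigma$ are then continuous, the identity holds everywhere, and in particular all $F_2$-periods vanish. Note that once the continuous coboundary is exhibited, Proposition~\ref{livi} is no longer needed for this direction.
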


Now assume that $F$ is a positive H\"older continuous function. We can define its topological entropy in the following way. Let $T$ be a positive real number. We denote by $R_T(F)$ the set of all periodic orbit in $X$ with $F$-period less than $T$. 

\begin{definition}
	The \textbf{topological entropy} $h(F)$ of $F$ is given by:
	\begin{equation*}
	h(F)=\limsup_{T\rightarrow+\infty}\frac{\log|R_T(F)|}{T}.
	\end{equation*}
\end{definition}

\begin{proposition}[Parry-Pollicott \cite{PaPo}]
	The topological entropy $h$ of $F$ satisfies: $P(-hF)=0$.
\end{proposition}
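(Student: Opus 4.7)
The plan is to show that $t\mapsto P(-tF)$ has a unique zero $t^\ast$ and then identify $t^\ast$ with $h(F)$ by matching upper and lower counting estimates. For existence and uniqueness, I would invoke the Parry-Pollicott proposition: $t\mapsto P(-tF)$ is real-analytic, and its derivative equals $-\int F\,\mathrm{d}\mu_{-tF}$, which is strictly negative since $F>0$ and $\mu_{-tF}$ is a probability measure. At $t=0$ one has $P(0)=h_{\mathrm{top}}(\sigma)>0$, while the variational formula together with $F\ge F_{\min}>0$ yields $P(-tF)\le h_{\mathrm{top}}(\sigma)-tF_{\min}\to-\infty$. Continuity combined with strict monotonicity then produces the required unique $t^\ast$.

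For the upper bound $h(F)\le t^\ast$ I would argue by direct counting. Fix $\epsilon>0$ and set $-\delta:=P(-(t^\ast+\epsilon)F)<0$. By the definition of pressure there exists $C>0$ with
\begin{equation*}
\sum_{\sigma^n x=x}e^{-(t^\ast+\epsilon)S_n F(x)}\le C\,e^{-\delta n/2}\qquad(n\ge 1),
\end{equation*}
where $S_n F=F+F\circ\sigma+\cdots+F\circ\sigma^{n-1}$. Any periodic orbit $O$ with $F$-period $\lambda(O)\le T$ has shift-period $p(O)\le T/F_{\min}$, and each of its $p(O)$ points $x$ satisfies $S_{p(O)}F(x)=\lambda(O)\le T$, contributing at least $e^{-(t^\ast+\epsilon)T}$ to the sum above at $n=p(O)$. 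Partitioning $R_T(F)$ by shift-period therefore gives
\begin{equation*}
|R_T(F)|\le e^{(t^\ast+\epsilon)T}\sum_{p=1}^{\lfloor T/F_{\min}\rfloor}\frac{1}{p}\sum_{\sigma^p x=x}e^{-(t^\ast+\epsilon)S_pF(x)}\le C'\,e^{(t^\ast+\epsilon)T},
\end{equation*}
whence $h(F)\le t^\ast+\epsilon$; letting $\epsilon\downarrow 0$ concludes this half.

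The main obstacle is the lower bound $h(F)\ge t^\ast$. The clean conceptual route is via the suspension flow over $\sigma$ with roof function $F$: its topological entropy is $h(F)$ by definition, and Abramov's formula combined with the variational principles for the base and for the suspension identifies it as
\begin{equation*}
h(F)=\sup_{\mu\in M^\sigma}\frac{h_\mu(\sigma)}{\int F\,\mathrm{d}\mu}.
\end{equation*}
The variational principle applied to $-t^\ast F$ gives $h_\mu(\sigma)-t^\ast\int F\,\mathrm{d}\mu\le P(-t^\ast F)=0$ for every $\mu\in M^\sigma$, so $h_\mu(\sigma)/\int F\,\mathrm{d}\mu\le t^\ast$ with equality attained at the equilibrium state $\mu_{-t^\ast F}$, forcing $h(F)=t^\ast$. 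A more hands-on alternative, should one wish to avoid invoking suspension-flow theory, is to use Bowen's equidistribution theorem for periodic orbits weighted by $e^{-t^\ast\lambda(O)}$: the orbits whose mean $\lambda(O)/p(O)$ lies near $\int F\,\mathrm{d}\mu_{-t^\ast F}$ carry almost all of the mass of the orbital sum at $t^\ast$, and since this sum has zero exponential growth rate, counting these typical orbits produces $|R_T(F)|\gtrsim e^{t^\ast T}/T$, which matches the upper bound and closes the argument.
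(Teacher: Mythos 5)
The paper does not actually prove this proposition; it is quoted from Parry--Pollicott \cite{PaPo}, so there is no in-paper argument to compare against. Your strategy --- locate the unique zero $t^\ast$ of the strictly decreasing, analytic function $t\mapsto P(-tF)$, then squeeze $h(F)$ between matching counting bounds --- is the standard proof of this fact. Your upper bound is complete and correct as written: the partition of $R_T(F)$ by shift-period, the bound $p(O)\le T/F_{\min}$, and the geometric summability supplied by $P(-(t^\ast+\epsilon)F)<0$ fit together exactly as you claim.

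The one genuine soft spot is the lower bound. The assertion that the topological entropy of the suspension flow with roof $F$ ``is $h(F)$ by definition'' is not a definition but the substance of the claim: the paper defines $h(F)$ as the exponential growth rate of $|R_T(F)|$, whereas Abramov's formula computes the \emph{variational} entropy $\sup_\mu h_\mu(\sigma)/\int F\,\mathrm{d}\mu$ of the flow, and identifying these two quantities for the suspension flow is precisely the nontrivial input. The naive counting bound in this direction is genuinely lossy: using only $F_{\min}\le F\le F_{\max}$ one gets $|R_T(F)|\ge \frac{1}{n}|\mathrm{Fix}(\sigma^n)|$ for $T=nF_{\max}$ and hence only $h(F)\ge t^\ast F_{\min}/F_{\max}$. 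So the equidistribution/large-deviations input you name in your alternative route --- the orbits dominating $\sum_{\sigma^nx=x}e^{-t^\ast S_nF(x)}$ have $S_nF(x)\approx n\int F\,\mathrm{d}\mu_{-t^\ast F}$ and number roughly $e^{nh_{\mu_{-t^\ast F}}(\sigma)}$, with $h_{\mu_{-t^\ast F}}(\sigma)=t^\ast\int F\,\mathrm{d}\mu_{-t^\ast F}$ --- is not an optional refinement but the required argument; it should be carried out (or the prime orbit theorem of Parry--Pollicott invoked explicitly) rather than hidden behind ``by definition''. With that step made honest, the proof is the standard one.
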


\subsection{Construction of the pressure metric}
Let $C_0(X,\mathbb{R})\subset C(X,\mathbb{R})$ denote the space of pressure $0$ functions on $X$. The \textbf{\textit{tangent space}}	of $C_0(X,\mathbb{R})$ at a function $F$ is defined to be:	\begin{equation}
	{\rm T}_F C_0(X,\mathbb{R}):=\{G\in C(X,\mathbb{R})\,:\,\int_X G\,{\rm d}\mu_F=0\}.
\end{equation}
\begin{definition}
The \textbf{\textit{pressure form}} on $C_0(X,\mathbb{R})$ is defined by the Hessian of the pressure function $P$ renormalized by a factor $$-\frac{1}{\int_X F\,{\rm d}\mu_F}.$$
\end{definition}
It induces the \textbf{\textit{pressure semi-norm}} given by the following formula:
\begin{equation}
  \|G\|^2:=-\frac{{\rm Var}(G,\mu_F)}{\int_X F\,{\rm d}\mu_F}.
 \end{equation}
for $G\in {\rm T}_F C_0(X,\mathbb{R})$.

Now consider a marked hyperbolic structure $m$ on $S$. It induce a positive H\"older continuous function $F_m$ sending each $x\in X$ to the $m$-length of the $0$-position segment associated to $x$. Then we can define the \textbf{\textit{thermodynamic map}}:
\begin{eqnarray*}
	\mathcal{I}:\mathcal{T}(S)&\rightarrow& C_0(X,\mathbb{R})\\
			m &\mapsto& -h_mF_m
\end{eqnarray*}
where $h_m$ is the topological entropy of $F_m$.

Thus the pull back of the pressure form by $\mathcal{I}$ defines a positive semi-definite $2$-form $p$ on the tangent space of $\mathcal{T}(S)$. We still call it the pressure form. In \cite{BCLS}, the authors obtained the following result:
\begin{theorem}[Bridgeman-Canary-Labourie-Sambarino \cite{BCLS}]
The pressure form $p$ on $\mathcal{T}(S)$ is non-degenerate, thus induces a Riemannian metric on $\mathcal{T}(S)$.
\end{theorem}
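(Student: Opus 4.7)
The goal is to show that for every nonzero $v\in T_m\mathcal{T}(S)$ the pressure seminorm $\|d\mathcal{I}_m(v)\|$ is strictly positive. By the definition of the pressure seminorm together with item~3 of the Parry--Pollicott proposition, I have $\|d\mathcal{I}_m(v)\|^2=0$ if and only if $d\mathcal{I}_m(v)$ is Liv\v{s}ic cohomologous to the zero function. So the whole task reduces to checking that the differential $d\mathcal{I}_m$ is injective modulo coboundaries.

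First I would verify that $\mathcal{I}$ is of class $C^1$ (analyticity of the pressure function plus the implicit equation $P(-h_mF_m)=0$ gives that $m\mapsto h_m$ is $C^1$, and $m\mapsto F_m$ is clearly smooth) and compute
\[
 d\mathcal{I}_m(v)=-dh_m(v)\,F_m-h_m\,dF_m(v).
\]
Since the $F_m$-period of the periodic orbit corresponding to a closed geodesic $\gamma\subset\mathrm{NW}S$ equals $l_m(\gamma)$, the $d\mathcal{I}_m(v)$-period of that same orbit is
\[
 -dh_m(v)\,l_m(\gamma)-h_m\,dl_m(\gamma)(v)=-d\bigl(h_m\,l_m(\gamma)\bigr)(v).
\]
Liv\v{s}ic's Proposition~\ref{livi} then converts the cohomological vanishing into the statement that $d(h_m\,l_m(\gamma))(v)=0$ for every closed geodesic $\gamma$ in the interior of $S$ (including the boundary components themselves).

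Pairing any two such curves and taking ratios yields $dl_m(\gamma)(v)/l_m(\gamma)=-dh_m(v)/h_m=:c$, a constant independent of $\gamma$. Thus $v$ would be an infinitesimal direction in $\mathcal{T}(S)$ along which the entire marked length spectrum is rescaled to first order by the single factor $e^{c}$. A uniform infinitesimal rescaling of all lengths can only come from a conformal scaling of the hyperbolic metric, which changes the constant curvature away from $-1$ and so leaves Teichm\"uller space; hence $c=0$ and I obtain $dl_m(\gamma)(v)=0$ for every $\gamma\in\mathrm{NW}S$.

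The last step is an infinitesimal marked length rigidity argument, upgrading vanishing of all length derivatives to $v=0$. Concretely, the orthogeodesic coordinates $(b_1,\dots,b_{3s})$ from the previous section can be written, by right-angled hexagon trigonometry, in terms of lengths of the boundary components together with finitely many simple closed geodesics crossing $T$; all such curves lie in $\mathrm{NW}S$, so vanishing of their length derivatives along $v$ forces $dm=0$ in these coordinates. The main obstacle I expect is precisely this last rigidity step: one must verify that closed geodesics confined to the non-wandering set are abundant enough to pin down a bordered hyperbolic metric, since, unlike the closed case, many candidate test curves exit through the boundary. Once that is established, the remainder is a direct assembly of Liv\v{s}ic's theorem with the Parry--Pollicott variance criterion.
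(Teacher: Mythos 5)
The paper offers no proof of this theorem --- it is quoted from \cite{BCLS} --- so there is no internal argument to compare against; I am judging your sketch on its own terms. Your overall strategy is the standard one (and the one used in \cite{BCLS}): degeneracy of the pressure form in direction $v$ means ${\rm Var}(d\mathcal{I}_m(v),\mu_{\mathcal{I}(m)})=0$, which by part (3) of the Parry--Pollicott proposition and Proposition~\ref{livi} is equivalent to the vanishing of all periods of $d\mathcal{I}_m(v)$; and your computation that the period over the orbit coding a closed geodesic $\gamma$ equals $-d\bigl(h_m\,\ell_m(\gamma)\bigr)(v)$ is correct. Your closing worry about whether closed geodesics in $\mathrm{NW}S$ are ``abundant enough'' is not where the difficulty lies: every closed geodesic of a compact hyperbolic surface with geodesic boundary already lies in $\mathrm{NW}S$ (it is either entirely interior or a boundary component), so once you know $d\ell_\gamma(v)=0$ for all closed $\gamma$, the conclusion $v=0$ is the standard fact that finitely many length functions (boundary lengths, pants curves, and curves detecting the twists) give local coordinates on $\mathcal{T}(S)$.

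The genuine gap is the step eliminating the constant $c$. From $d(h_m\ell_\gamma)(v)=0$ for all $\gamma$ you correctly get $d\ell_\gamma(v)=c\,\ell_m(\gamma)$ with $c=-dh_m(v)/h_m$ independent of $\gamma$. In the closed case $h\equiv 1$ kills $c$ for free; for a bordered surface the entropy is a nonconstant function on $\mathcal{T}(S)$, and ruling out $c\neq 0$ is precisely the new content. Your justification --- that a uniform first-order rescaling of all lengths ``can only come from a conformal scaling of the metric,'' which leaves Teichm\"uller space --- is not a proof: the hypothesis is only that $v$ rescales the marked length spectrum uniformly to first order, and nothing identifies such a $v$ with the tangent vector of a path of conformally rescaled metrics, so observing that conformal rescaling destroys curvature $-1$ does not exclude it. What must actually be shown is that no nonzero tangent vector to $\mathcal{T}(S)$ satisfies $d\ell_\gamma(v)=c\,\ell_\gamma$ for all closed $\gamma$ with $c\neq 0$; one concrete route is to differentiate the trace relation $\operatorname{tr}\rho(AB)+\operatorname{tr}\rho(AB^{-1})=\operatorname{tr}\rho(A)\operatorname{tr}\rho(B)$ using $\operatorname{tr}\rho(\gamma)=\pm 2\cosh(\ell_\gamma/2)$ and show that the resulting identity among lengths is incompatible with the trace relation itself (for instance by letting $B$ run over high powers of a fixed element), but some argument of this kind must be supplied. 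As written, the proof is incomplete at exactly the point where the bordered case fails to follow formally from the closed-surface case.
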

\begin{definition}
	This metric is called the \textbf{pressure metric}. We denote it by $g_p$.
\end{definition}

\section{The Weil-Petersson type metric on the moduli space of metric graphs}
In this section, we first recall the result about the Weil-Petersson type metric on the moduli space of metric graphs by Pollicott and Sharp in \cite{SP}, then we give the definition of the pressure metric on the same space which comes from rescaling their metric.

\subsection{Moduli space of metric graph}
Let $\mathbb{G}=(V,E)$ be a finite non-oriented graph with valence at each vertex at least $3$. Let $E=\{e_1,\dots,e_k\}$ denote the set of edges.

\begin{definition}
 A \textbf{metric} on $\mathbb{G}$ is a positive edge weighting $l:E\rightarrow\mathbb{R}^+$. The value $l(e)$ is the \textbf{length} of edge $e$. The \textbf{volume} ${\rm Vol}(l)$ of $l$ is given by the sum of all edge lengths.
\end{definition}
Let $\mathcal{M}(\mathbb{G})$ denote the space of all metrics on $\mathbb{G}$.

Let $\overline{\mathbb{G}}$ denote the oriented graph which is a branched $2$-cover of $\mathbb{G}$  branching over vertices of $\mathbb{G}$. The two lifts of an edge $e_j$ in $\overline{\mathbb{G}}$ are  its two oriented versions with different orientations denoted by $+e_j$ and $-e_j$. We denote by $\overline{E}$ the set of edges of $\overline{\mathbb{G}}$ and by $\phi$ the covering map.
\begin{center}
 \includegraphics[scale=0.9]{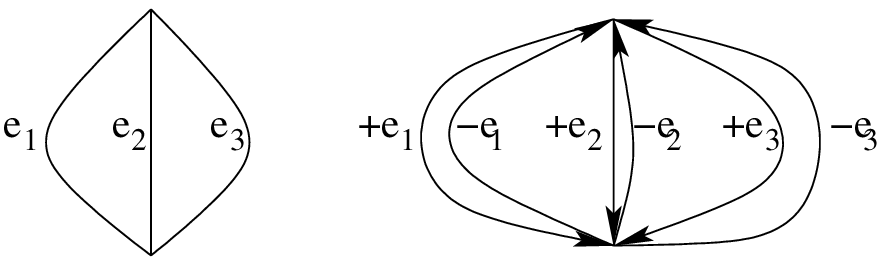}
\end{center}

\begin{definition}
A \textbf{geodesic} on $\mathbb{G}$ is the $\phi$-image of a sequence of $(\overline{e}_n)_{n\in\mathbb{Z}}$ such that for each $n$, we have $\overline{e}_{n+1}$ follows $\overline{e}_n$ and $\overline{e}_{n+1}\neq-\overline{e}_n$. We call one such sequence $(\overline{e}_n)_{n\in\mathbb{Z}}$ an \textbf{oriented geodesic} of $\mathbb{G}$.
\end{definition}
Let $T$ be a positive number. We denote by $R_T(l)$ the set of all oriented closed geodesics with $l$-length less than $T$. 
\begin{definition}
The \textbf{topological entropy} $h(l)$ of $l$ is given by:
\begin{equation}
 h(l)=\lim_{T\rightarrow+\infty}\frac{\log|R_T(l)|}{T}.
\end{equation}
\end{definition}

To define the moduli space of metric graphs, we should avoid the situation where two metrics are obtained from each other by rescaling. To achieve this, there are two ways to renormalize the metrics on $\mathbb{G}$: let ${\rm Vol}(l)=1$ or let $h(l)=1$. Following this idea, we can define two moduli spaces:
\begin{eqnarray}
 \mathcal{M}(\mathbb{G},1)&:=&\{l\in\mathcal{M}(\mathbb{G}):{\rm Vol}(l)=1)\};\\
 \mathcal{M}_1(\mathbb{G})&:=&\{l\in\mathcal{M}(\mathbb{G}):h(l)=1\}.
\end{eqnarray}

We will follow the idea in \cite{SP} and only consider $\mathcal{M}_1(\mathbb{G})$ from now on.
\begin{remark}
 Notice that there is a natural bijection $b:\mathcal{M}(\mathbb{G},1) \rightarrow\mathcal{M}_1(\mathbb{G})$ by sending $l$ to $h(l)l$. 
 
 For closed surface case, these two renormalizations of constant curvature metrics on $S$ are the same. More precisely, the volume of a hyperbolic surface only depends on the topological type of the surface. Meanwhile the entropy of the geodesic flow for a closed hyperbolic surface is always $1$. Therefore if we consider the Teichm\"uller space, we renormalize metrics in the two ways at the same time.
 
 For bordered surface case, we still consider the Teichm\"uller space. This means that we renormalize metrics to have the same volume while the entropy function is not constant.
 \QEDclosed
\end{remark}

\subsection{Weil-Petersson type metric on $\mathcal{M}_1(\mathbb{G})$}
Let $A$ be a $\{0,1\}$-matrix in a size $k\times k$ defined by:
\begin{equation*}
 A(\overline{e},\overline{e}')
 = \left\{ \begin{array}{ll}
1 & \textrm{if $\overline{e}'$
  follows $\overline{e}$ and
  $\overline{e}'\neq-\overline{e}$}\\
0 & \textrm{otherwise}.
\end{array} \right.
\end{equation*}

Then we define
the following subshift
space of finite type :
\begin{equation}
 X'=\{(\overline{e}_n)_{n\in\mathbb{Z}}
 \,:\,\forall n\in\mathbb{Z},\, \overline{e}_n\in\overline{E}
 \,\,{\rm and }\,\, A(\overline{e}_n,\overline{e}_{n+1})=1\},
\end{equation}
associated with a shift operator
$\sigma:X'\rightarrow X'$.

We denote by $C(X',\mathbb{R})$ the space of H\"older continuous function on $X'$. For a function $F\in C(X',\mathbb{R})$, we can define its topological entropy, pressure and its associated equilibrium state in the same way as we described in Section \ref{S}. Again, we are interested in a subspace of $C(X',\mathbb{R})$ consisting of pressure $0$ functions and we denote it by $C_0(X',\mathbb{R})$. By applying the thermodynamic formalism, we can describe its tangent space and the Hessian of pressure function induces a positive semi-definite 2-form on $C(X',\mathbb{R})$.
 
A metric $l$ on $\mathbb{G}$ induces a locally constant function $F_l$ on $X'$ such that:
\begin{equation*}
 F_l((\overline{e}_n)_{n\in\mathbb{Z}})=l(\overline{e}_0).
\end{equation*}

By sending $l\in\mathcal{M}_1(\mathbb{G})$ to $-F_l$, we obtain the thermodynamic map $\mathcal{I}:\mathcal{M}_1(\mathbb{G})\rightarrow C_0(X',\mathbb{R})$.

\begin{theorem}[Sharp-Pollicott \cite{SP}]
 The pullback of the Hessian of pressure function by $\mathcal{I}$ is positive definite and induces a Riemannian metric on $\mathcal{M}_1(\mathbb{G})$.
\end{theorem}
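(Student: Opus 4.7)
The strategy is modeled on the closed-surface argument of \cite{BCLS}: the pressure Hessian is positive semi-definite on $C_0(X',\mathbb{R})$ with null directions given exactly by Liv{\v s}ic coboundaries, so its pullback by $\mathcal{I}$ will be positive definite if and only if $D\mathcal{I}$ has trivial kernel modulo coboundaries. Indeed, the analogue on $X'$ of item $(3)$ of the Parry--Pollicott proposition yields ${\rm Var}(G,\mu_{-F_l})\ge 0$ for every $G\in T_{-F_l}C_0(X',\mathbb{R})$, with equality precisely when $G$ is Liv{\v s}ic cohomologous to $0$.

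For $\dot{l}\in T_l\mathcal{M}_1(\mathbb{G})\subset\mathbb{R}^E$ the differential $D\mathcal{I}(\dot{l})=-\dot{F}_{l}$ is the locally constant function on $X'$ sending $(\overline{e}_n)_{n\in\mathbb{Z}}$ to $-\dot{l}(\phi(\overline{e}_0))$. By the Liv{\v s}ic Proposition applied to the mixing shift $(X',\sigma)$, $-\dot{F}_{l}$ is a coboundary if and only if its period vanishes on every $\sigma$-periodic orbit. A periodic $\sigma$-orbit is a closed non-backtracking oriented walk $\gamma=(\overline{e}_0,\dots,\overline{e}_{n-1})$ in $\mathbb{G}$, and the associated $\dot{F}_{l}$-period equals $\sum_{i=0}^{n-1}\dot{l}(\phi(\overline{e}_i))=\sum_{e\in E}n_\gamma(e)\,\dot{l}(e)$, i.e.\ the directional derivative along $\dot{l}$ of the $l$-length of $\gamma$. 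The theorem is therefore reduced to the rigidity statement: if $\sum_{e\in E}n_\gamma(e)\,\dot{l}(e)=0$ for every closed non-backtracking oriented walk $\gamma$ in $\mathbb{G}$, then $\dot{l}=0$.

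To prove this rigidity I would use that every vertex of $\mathbb{G}$ has valence $\ge 3$ to construct enough closed non-backtracking walks whose characteristic vectors $\chi_\gamma=(n_\gamma(e))_{e\in E}$ span $\mathbb{R}^E$. Fixing a spanning tree $T_0\subset\mathbb{G}$, the $r=|E|-|V|+1$ non-tree edges $\{f_1,\dots,f_r\}$ give basic loops $\gamma_1,\dots,\gamma_r$. At any vertex $v$ traversed by some $\gamma_i$, the valence condition supplies at least two half-edges at $v$ not used by the $\gamma_i$-strand through $v$; splicing in a short non-backtracking detour along those half-edges and closing up through $T_0$ produces a new closed non-backtracking walk whose $\chi$-vector differs from $\chi_{\gamma_i}$ in a controlled way. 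By varying $v$ and the detour one can exhibit a spanning family of $\chi_\gamma$-vectors in $\mathbb{R}^E$, forcing $\dot{l}=0$.

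The main obstacle is this last combinatorial step. The valence hypothesis is genuinely needed: on a pure cycle graph (valence $2$ everywhere) every closed non-backtracking walk has the same support and the period functionals cannot separate edges. Establishing the spanning property in general requires a careful inductive argument on $\mathbb{G}$ --- for instance by contracting a well-chosen edge and recovering its length from a local period computation --- or an appeal to marked length spectrum rigidity for finite metric graphs as implicit in \cite{SP}.
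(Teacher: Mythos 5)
The paper itself offers no proof of this statement: it is quoted verbatim from Pollicott--Sharp \cite{SP}, so there is nothing internal to compare against. Measured against the argument in \cite{SP}, your reduction is exactly the right one: positive semi-definiteness of the variance, identification of the null directions with Liv{\v s}ic coboundaries via item (3) of the Parry--Pollicott proposition, the observation that $D\mathcal{I}(\dot{l})=-\dot{F}_l$ is locally constant with periods $\sum_{e}n_\gamma(e)\,\dot{l}(e)$, and hence the reduction of the theorem to the rigidity statement that these period functionals separate points of $\mathbb{R}^E$. The genuine gap is that this rigidity statement is the entire content of the theorem, and you do not prove it: your spanning-tree-plus-detour sketch is heuristic, you do not verify that the spliced walks remain non-backtracking or that the resulting $\chi_\gamma$ actually span $\mathbb{R}^E$ (note that ordinary closed walks only span the cycle space, of dimension $|E|-|V|+1$, so the non-backtracking multiplicities are doing real work here), and you end by deferring to ``marked length spectrum rigidity \ldots as implicit in \cite{SP}'', which is circular in a proof of the theorem of \cite{SP}.

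The step can be closed cleanly as follows, and this is essentially how \cite{SP} does it. Fix an edge $e$ with endpoints $u,v$. Using that every vertex has valence at least $3$ and that the non-backtracking edge shift on a connected graph which is not a circle is mixing, one can choose two non-backtracking paths $P,Q$ from $v$ to $u$ whose first edges are distinct from each other and from $\bar{e}$, and whose last edges are distinct from each other and from $\bar{e}$. Then $eP$, $eQ$ and $\bar{P}Q$ are all closed non-backtracking walks (the valence and distinctness conditions rule out backtracking at the two junctions), and
\begin{equation*}
\chi_{eP}+\chi_{eQ}-\chi_{\bar{P}Q}=2\chi_{e},
\end{equation*}
so the vanishing of all periods forces $2\dot{l}(e)=0$. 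Running over all edges gives $\dot{l}=0$, which is the rigidity you need; everything upstream of this in your write-up is correct and matches the structure of the argument in \cite{SP}.
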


\begin{definition}[Sharp-Pollicott \cite{SP}]
	This Riemannian metric on $\mathcal{M}_1(\mathbb{G})$ is called the \textbf{\textit{Weil-Petersson type metric}}. We denote it by $g'_{wp}$.
\end{definition}
\begin{definition}
 The \textbf{\textit{pressure metric}} $g'_p$ on $\mathcal{M}_1(\mathbb{G})$ is obtained by rescaling $g'_{wp}$ by the function $$\frac{1}{\int F_l\,{\rm d}\mu_{-F_l}}.$$
\end{definition}
With these notation, our main theorem is stated as follows:
\begin{maintheorem}
	For a bordered surface $S$, the metric $g_p$ on $\mathcal{T}(S)$ is incomplete. There exists a graph $\mathbb{G}$ such that a partial completion of $(\mathcal{T}(S),g_p)$ can be given by $(\mathcal{T}(S),g_p)\sqcup(\mathcal{M}_1(\mathbb{G}),g'_p)$.
\end{maintheorem}

\subsection{An alternative symbolic coding for the geodesic flow of $\mathbb{G}$}
In Section \ref{S}, we introduced $X$ a subshift space of finite type which is a coding space for the geodesic flow on $\mathrm{NW}S$. In this part, we show that this coding space can also be used for the geodesic flow on $\mathbb{G}$. Therefore $\mathcal{M}_1(\mathbb{G})$ can be also embedded in $C_0(X,\mathbb{R})$ by the thermodynamic map.

Let us recall the fact that the graph $\mathbb{G}$ is the dual graph of the ideal triangulation $T$ which we chose when we give the definition of pressure metric on $\mathcal{T}(S)$. There we choose a subset of arcs $\{\alpha'_1,...,\alpha'_{2g+r-1}\}$ in $T$ to cut the surface $S$ into a topological disk. Notice that when we cut $S$, we also cut $\mathbb{G}$ as an embedded graph in $S$. By cutting along the same arcs, the resulting graph is simply connected. Then each orbit of the geodesics flow on $\mathbb{G}$ will be cut into infinitely many oriented segments.  The starting points of these oriented segments have the type in $I=\{\alpha_1'^{\pm},...,\alpha_{2g+r-1}'^{\pm}\}$ defined above. Therefore, we can also use $I$ to code the geodesics flow of $\mathbb{G}$.

Given a metric $l$ on $\mathbb{G}$, we would like to consider it as a H\"older continuous function on $X$. Thus we have to decide the cut point on each edge. We will choose the cut point to be the mid point of each edge. Then for each metric $l\in \mathcal{M}_1(\mathbb{G})$, we obtain a locally constant function $F_l\in C(X,\mathbb{R})$. Since the topological entropy of $l$ is $1$, The thermodynamic map is defined by sending $l$ to $-F_l$. By thermodynamic formalism, we can define a pressure metric $g''_p$ on $\mathcal{M}_1(\mathbb{G})$. In fact, we can show that,
\begin{lemma}
 This pressure metric $g''_p$ on $\mathcal{M}_1(\mathbb{G})$ coincides with $g'_p$ defined above. 
\end{lemma}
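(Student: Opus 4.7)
My plan is to observe that $(X,\sigma)$ and $(X',\sigma)$ are two Markov codings of the same dynamical system --- the geodesic flow on the metric graph $(\mathbb{G},l)$ --- with the respective length functions $F_l\in C(X,\mathbb{R})$ and $F_l\in C(X',\mathbb{R})$ as roof functions. Because every thermodynamic quantity entering the definitions of $g'_p$ and $g''_p$ (pressure, topological entropy, equilibrium state, variance, and the renormalizing integral $\int F_l\,d\mu_{-F_l}$) is an invariant of the suspended flow, the pullbacks of the renormalized Hessian of $P$ must produce the same bilinear form on $T_l\mathcal{M}_1(\mathbb{G})$, giving $g''_p = g'_p$.

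To make this precise I first exhibit a bijection between periodic orbits of the two codings and check that it matches their $F_l$-periods. Every closed geodesic $\gamma$ on $\mathbb{G}$ corresponds to a periodic element of $X'$ via its cyclic sequence of oriented edges, and to a periodic element of $X$ via its cyclic sequence of oriented crossings of $\alpha'_1,\ldots,\alpha'_{s+1}$. The assignment into $X$ is well defined, and the function $F_l\in C(X,\mathbb{R})$ is locally constant, because between two consecutive $\alpha'$-crossings the non-backtracking path in $\mathbb{G}$ is uniquely determined: cutting $S$ along $\{\alpha'_i\}$ yields a disk, and the corresponding cut graph (obtained from $\mathbb{G}$ by removing the $s+1$ dual edges) has $2s$ vertices and $2s-1$ edges, so by an Euler characteristic count it is a tree, in which non-backtracking paths between prescribed half-edges are unique. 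Matching of periods is then immediate: on $X'$ the $F_l$-period of $\gamma$ equals the sum of the $l$-lengths of the oriented edges traversed by $\gamma$, while on $X$ it equals the sum of the $l$-lengths of the inter-crossing segments; both telescope to the total $l$-length of $\gamma$. Formula~\eqref{pressure} then gives the same value of $P(F_l)$ and the same topological entropy in both codings, so the condition $h(l)=1$ carves out the same subset; by Proposition~\ref{livi} the equilibrium states $\mu_{-F_l}$ project onto the same Bowen--Margulis measure on $(\mathbb{G},l)$, so the two renormalizing integrals $\int F_l\,d\mu_{-F_l}$ agree. Applying the same matching argument to a smooth family $l_t$ shows that $P(-F_{l_t})$ is the same function of $t$ in each coding, hence its Hessian, and therefore the Parry--Pollicott variances entering the definition of the Weil--Petersson form, also agree.

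The main technical point to settle is why the periodic orbit spectrum alone determines the pressure, its equilibrium state, and the variances in the Hessian of $P$, even as one crosses between two different alphabets. For the pressure and entropy this is direct from~\eqref{pressure}; for the equilibrium state and variance the cleanest routes are either (i) to build a common extension of $(X,\sigma)$ and $(X',\sigma)$ on which both roof functions pull back, forcing Liv{\v s}ic cohomology via Proposition~\ref{livi}, or (ii) to invoke Bowen--Ruelle's identification of the pressure of a H\"older roof function with the topological pressure of its suspension flow, so that all quantities of interest are read directly off the geodesic flow on $(\mathbb{G},l)$ without reference to a chosen coding. Either approach bypasses any alphabet-level comparison and completes the proof.
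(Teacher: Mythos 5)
Your overall strategy---reduce everything to coding-independent data of the geodesic flow on $(\mathbb{G},l)$---is the right idea, and it differs from the paper's route: the paper simply recalls the renormalized intersection function $J(l_1,l_2)$, which is defined by orbit counting and is therefore manifestly independent of the coding, and cites the fact that its Hessian in the second variable equals the pressure metric in either coding. Several of your intermediate steps are correct and nicely done: the bijection between closed geodesics and periodic orbits of the two subshifts, the observation that the cut graph is a tree (so that $F_l$ is a well-defined locally constant function on $X$), the telescoping of $F_l$-periods to the total $l$-length, the coding-independence of the topological entropy $h(l)$ defined via $R_T$, and the identification of both equilibrium states with the Bowen--Margulis measure of the flow.

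The genuine gap is in the last third of the argument, where you assert that the renormalizing integrals $\int F_l\,\mathrm{d}\mu_{-F_l}$ agree in the two codings, that the variances agree, and that $P(F_l)$ (resp.\ $P(-F_{l_t})$ along a family) takes the same value in both codings. None of these is true. The two subshifts run on different clocks: $n$ counts oriented edges in $X'$ but crossings of the $s+1$ cut arcs in $X$, so for instance $P(0)$ is already different (it is the logarithm of the spectral radius of two different transition matrices), and $P(-F_{l'})$ for $h(l')\neq 1$ is likewise coding-dependent. The integral $\int F_l\,\mathrm{d}\mu_{-F_l}$ is the mean return time to the chosen cross-section; although both equilibrium states project to the same Bowen--Margulis measure, these mean return times are different numbers (an inter-crossing segment typically traverses several edges). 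Correspondingly ${\rm Var}(G,\mu_{-F_l})$ rescales by exactly the same factor, so that only the quotient ${\rm Var}(G,\mu_{-F_l})/\int F_l\,\mathrm{d}\mu_{-F_l}$---the variance of the associated observable for the suspension flow with respect to the Bowen--Margulis measure---is a coding-independent quantity. This is not a cosmetic point: as written, your argument would equally well "prove" that the unrenormalized Weil--Petersson type forms built from $X$ and $X'$ coincide, which is false; the renormalization by $1/\int F_l\,\mathrm{d}\mu_{-F_l}$ is precisely what makes the lemma true for $g'_p$ and $g''_p$. Your route (ii) is the correct repair, but it must be carried out at the level of the renormalized form (the flow variance), not claimed separately for the numerator and the denominator; route (i) has the same defect, since a common extension introduces yet a third clock. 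A minor additional remark: the Euler characteristic count $V-E=1$ alone does not force the cut graph to be a tree (a forest plus a cycle can also have $\chi=1$); you should add that the cut graph is a deformation retract of the cut-open disk, hence connected and simply connected.
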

\begin{proof}
Let us first recall the definition of the renormalized intersection function between two metrics $l_1$ and $l_2$ in $\mathcal{M}_1(\mathbb{G})$:
\begin{equation}
 J(l_1,l_2):=\frac{h(l_2)}{h(l_1)}\lim_{T\rightarrow\infty}
 \left(\frac{1}{|R_T(l_1)|}\left(\sum_{\gamma\in R_T(l_1)}
 \frac{l_2(\gamma)}{l_1(\gamma)}\right)\right).
\end{equation}
We fix the first variable and obtain a function from $\mathcal{M}_1(\mathbb{G})$ to $\mathbb{R}$. The Hessian of this function on the second variable coincides with $g'_p$, as well as $g''_p$. This completes the proof.
\end{proof}

\section{Degeneration of hyperbolic bordered surfaces to a metric graph}
In the introduction, we have describe a natural retraction of a bordered surface $S$ to its fat graph $\mathbb{G}$. Recall that $\mathbb{G}$ is dual to the ideal triangulation $T$ which we used to define the coordinates system for $\mathcal{T}(S)$. In this section, we will describe this retraction in a geometric way: a metric on $\mathbb{G}$ can be viewed as a projective limit of a sequence of hyperbolic metrics on $S$.

The coordinates $(b_1,\dots,b_{3s})$ of $\mathcal{T}(S)$ can be separated into triples where each triple $(b_i,b_j,b_k)$ consists of lengths of three boundary segments contained in one right-angled hexagon $H_n$ in $S\setminus T$. We consider those points in $\mathcal{T}(S)$ whose coordinates are such that every triple $(b_i,b_j,b_k)$ associated to one right-angled hexagon satisfy strict triangular inequalities. These points form an open cone $\mathcal{C}$ in $\mathcal{T}(S)$. 

Let $m=(b_1,\dots,b_{3s})$ be a point in $\mathcal{C}$. We assume that the sum of $b_i$'s is $1$. We consider the line $\{m_\lambda=(\lambda b_1,\dots,\lambda b_{3s})\mid\lambda\in\mathbb{R}^+\}$. We would like to show that its projective limit is a metric on the graph. To do this, we rescale each marked hyperbolic structure $m_\lambda$ by a factor $\lambda^{-2}$ so that the length element is rescaled by $\lambda^{-1}$. We denote the renormalized metric by $m_\lambda'$ which has constant curvature $-\lambda^4$. By the cosine rule for right-angled hexagons in the hyperbolic plane, since a triple $b_i$, $b_j$ and $b_k$ satisfy strict triangle inequalities, the $m_\lambda$-lengths of their opposite sides which are arcs in $T$ will go to $0$ exponentially fast when $\lambda$ goes to infinity. As the $m'_\lambda$-curvature goes to $-\infty$ when $\lambda$ goes to $\infty$, we can see that in the limit, each hexagon converges to a metric graph with one vertex and three branches in the sense of Gromov-Hausdorff. 
\begin{center}
 \includegraphics[scale=0.9]{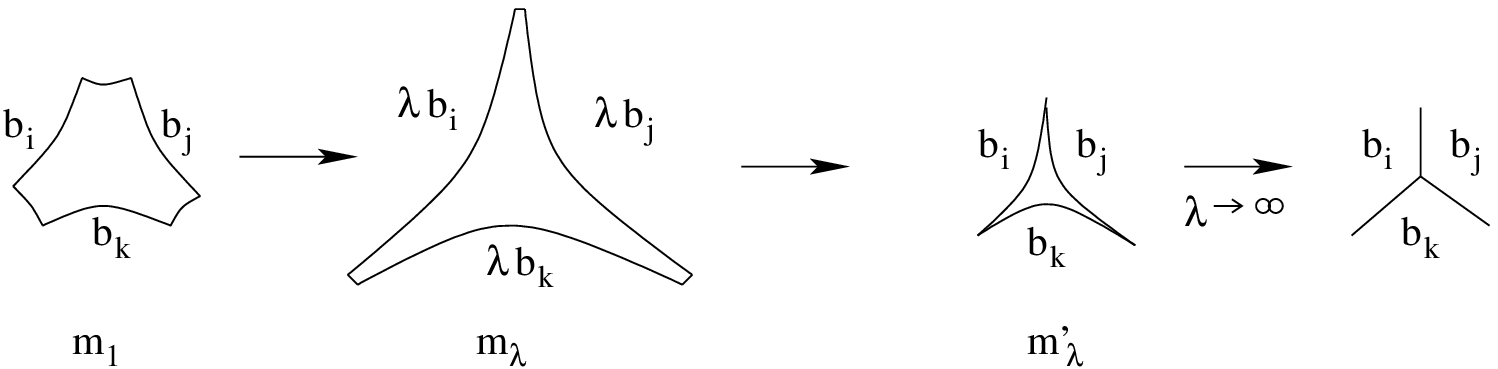}
\end{center}

Now consider the other hexagons $H^c_n$ whose sides does not contain $b_i$'s. By our notation given in Section \ref{S}, their sides on the boundary are denoted by $b^c_i$. When we change $m$ to $m_\lambda$, the length $b^c_i$ will not change to $\lambda b^c_i$. But we can use the cosine rule for right-angled hexagons to study their asymptotic behavior when $\lambda$ goes to infinity. Since all arcs in the ideal triangulation have their lengths converge to $0$ as $\lambda$ goes to infinity, we can conclude that a triple $b^c_i$, $b^c_j$ and $b^c_k$ contained in a right-angled hexagon $H^c_n$ will go to infinity. Moreover they will satisfy triangular inequality when $\lambda$ is big enough. To be more precisely, Let $H_1$, $H_2$ and $H_3$ be its three neighbors. Let $b_{q1}$, $b_{q2}$ and $b_{q3}$ be the sides of $H_q$ lying on the boundary of surface. Let $\alpha_q$ be the arc in $T$ separating $H^c_n$ from $H_q$. 
\begin{center}
 \includegraphics[scale=1.0]{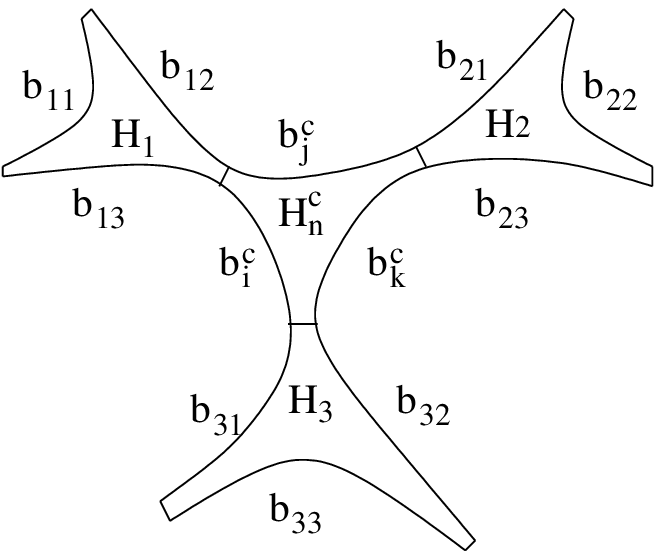}
\end{center}

When $\lambda$ is big enough, the length of $\alpha_1(\lambda)$ will be equal to $\sqrt{2}\exp(\lambda(b_{11}-b_{12}-b_{13})/2)$ plus $O(\exp(\lambda(b_{11}-b_{12}-b_{13})))$. This implies that $(b^c_k(\lambda))-b^c_i(\lambda)-b^c_j(\lambda))/\lambda$ converges to $b_{11}-b_{12}-b_{13}$ exponentially fast as $\lambda$ goes to infinity. The same argument gives the asymptotic behavior of $(b^c_j(\lambda)-b^c_k(\lambda)-b^c_i(\lambda))/\lambda$ and $(b^c_i(\lambda)-b^c_j(\lambda)-b^c_k(\lambda))/\lambda$. Therefore we get the following asymptotic behavior:
\begin{eqnarray*}
 &&\frac{b^c_i(\lambda)}{\lambda}\rightarrow\frac{1}{2}(b_{12}+b_{13}-b_{11}+b_{31}+b_{32}-b_{33}),\\
 &&\frac{b^c_j(\lambda)}{\lambda}\rightarrow\frac{1}{2}(b_{12}+b_{13}-b_{11}+b_{21}+b_{23}-b_{22}),\\
 &&\frac{b^c_k(\lambda)}{\lambda}\rightarrow\frac{1}{2}(b_{21}+b_{23}-b_{22}+b_{31}+b_{32}-b_{33}),
\end{eqnarray*}
as $\lambda$ goes to infinity. Therefore the renormalization of $H^c_n$ will converge to a graph with one vertex and three branches attaching to it.

When we glue all hexagons together, we can see that the surface $(S, m'_\lambda)$ degenerates to $\mathbb{G}$ equipped with a metric determined by $(b_1,\dots,b_{3s})$.

\begin{theorem}\label{sol}
	When $\lambda$ goes to infinity, the project limit of the cone $\mathcal{C}$ is $\mathcal{M}_1(\mathbb{G})$. The paths starting from different points in $\{(b_1,\dots,b_{3s})\mid\sum^{3s}_{i=1}b_i=1\}\cap\mathcal{C}$ will have different projective limit.
\end{theorem}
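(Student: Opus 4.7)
The plan is to prove the theorem in two steps. First, I would construct a map $\Phi$ assigning to each normalized point of $\mathcal{C}$ a point of $\mathcal{M}_1(\mathbb{G})$, and identify it with the projective limit of the corresponding ray. Second, I would verify that $\Phi$ is injective, with surjectivity following from a dimension count and invariance of domain.

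For the first step, I would build directly on the cosine-rule asymptotics established in the paragraphs preceding the theorem. Fix $m=(b_1,\dots,b_{3s})$ in $\mathcal{C}\cap\{\sum b_i=1\}$. For each good hexagon $H_n$ with boundary sides $b_i,b_j,b_k$ satisfying strict triangle inequalities, the three ortho-arcs decay exponentially in $\lambda$, so after the rescaling by $\lambda^{-1}$ the hexagon converges in Gromov--Hausdorff distance to a tripod with central vertex $v_n$ and branch lengths $\tfrac12(b_j+b_k-b_i)$, $\tfrac12(b_k+b_i-b_j)$, $\tfrac12(b_i+b_j-b_k)$; the distinguishing identity is that each boundary segment $b_p$ is the sum of the two branches it traverses. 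The analogous argument for every complementary hexagon $H^c_n$, using the asymptotics of $b^c_i(\lambda)/\lambda$ already derived from its three neighbors, produces a tripod whose branch lengths are explicit linear combinations of the neighbors' boundary data. Gluing the tripods along the collapsed ortho-arcs reconstitutes the dual graph $\mathbb{G}$, with the length of the edge dual to $\alpha_k$ equal to the sum of the two adjacent tripod branches. This defines a metric $l(m)\in\mathcal{M}(\mathbb{G})$, and the projective limit of the ray is $\Phi(m):=h(l(m))\cdot l(m)\in\mathcal{M}_1(\mathbb{G})$.

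For the second step, the construction above exhibits $m\mapsto l(m)$ as a linear map $L:\mathbb{R}^{3s}\to\mathbb{R}^{3s}$, since every $\mathbb{G}$-edge length is a half-sum-difference combination of the $b_i$. Two normalized points $m,m'$ have the same projective limit if and only if $l(m)$ and $l(m')$ are proportional, so injectivity of $\Phi$ reduces to showing that $\ker L$ is spanned by the scaling direction $(1,\dots,1)$; equivalently, $L$ is invertible. The baseline case $S=S_{0,3}$ yields the matrix $\bigl(\begin{smallmatrix}-1&1&1\\1&-1&1\\1&1&-1\end{smallmatrix}\bigr)$ of determinant $4$. For general $S$, the invertibility follows inductively along the construction of Lemma~\ref{nonadjacent}: attaching a puncture or a torus appends a block corresponding to a pants-type submatrix, and the determinant remains nonzero. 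Combining injectivity with the matching dimension $3s-1$ of the normalized slice and of $\mathcal{M}_1(\mathbb{G})$ yields bijectivity of $\Phi$.

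The principal obstacle is verifying the non-degeneracy of $L$ for arbitrary $S$. The Gromov--Hausdorff convergence of each hexagon to its tripod is essentially already set up in the text preceding the theorem, and the compatibility check on shared ortho-arcs is routine since the gluing is by construction compatible with the dual graph $\mathbb{G}$. The invertibility of $L$, on the other hand, depends delicately on the combinatorics of $\mathbb{G}$ and on the two-coloring chosen via Lemma~\ref{nonadjacent}; a uniform proof likely proceeds either by an explicit inductive determinant calculation following the puncture/torus attachment operations of that lemma, or by a rigidity argument showing that any deformation of $(b_1,\dots,b_{3s})$ preserving all $\mathbb{G}$-edge lengths must be a global rescaling.
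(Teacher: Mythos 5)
Your overall strategy --- realize the projective limit as a linear map $L$ from the boundary-segment lengths $(b_1,\dots,b_{3s})$ to the edge lengths of $\mathbb{G}$ and prove the correspondence is bijective --- is the same as the paper's. But you leave the decisive step open: you flag the invertibility of $L$ for a general surface as ``the principal obstacle'' and only sketch a possible induction over the puncture/torus attachments of Lemma~\ref{nonadjacent}. That is a genuine gap, and the inductive determinant computation you propose is not needed. The two ingredients you already have combine to make $L$ block-diagonal. First, the arcs of $T$ degenerate to the \emph{midpoints} of the edges of $\mathbb{G}$ --- this is exactly the observation the paper's proof isolates: your own asymptotic formulas for $b^c_i(\lambda)/\lambda$ show that the tripod branch of a complementary hexagon $H^c_n$ pointing toward an arc $\alpha_q$ has the same limit length as the tripod branch of the good hexagon $H_q$ on the other side of $\alpha_q$. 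Hence the edge dual to $\alpha_q$ has length exactly $b_{i}+b_{j}-b_{k}$ computed from the single good hexagon adjacent to it, and the complementary hexagons contribute no further unknowns or relations. Second, Lemma~\ref{nonadjacent} guarantees the $s$ good hexagons form an independent set among the $2s$ vertices of $\mathbb{G}$; counting edge-ends ($3s$ edges, $3s$ edge-ends at good vertices, none joining two good vertices) shows every edge of $\mathbb{G}$ meets exactly one good hexagon. Therefore $L$ decomposes into $s$ independent copies of the $3\times 3$ block you already computed for $S_{0,3}$, of determinant $4$, so $L$ is invertible; moreover its inverse expresses each $b_i$ as a positive half-sum of two edge lengths, which gives surjectivity onto all of $\mathcal{M}(\mathbb{G})$ directly, with no need for invariance of domain.

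A smaller logical slip: ``$\ker L$ is spanned by $(1,\dots,1)$'' and ``$L$ is invertible'' are not equivalent statements (the former asserts a one-dimensional kernel, and $(1,\dots,1)$ is in fact the ray direction, which $L$ does not kill). What you need, and what holds, is invertibility of $L$ itself; the normalization $\sum_i b_i=1$ then removes the projective ambiguity and yields the stated one-to-one correspondence with $\mathcal{M}_1(\mathbb{G})$.
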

\begin{proof}
The above discussion tells us the limit of the path $m'_\lambda$ is certain metric $l$ on $\mathbb{G}$. To obtain a metric with entropy $1$, we need to renormalize the metric by a constant factor $h(l)$. This gives one direction which says that the projective limit of $\mathcal{C}$ is contained in $\mathcal{M}_1(\mathbb{G})$. 

Now we prove the rest of the statement. Consider a metric $l\in\mathbb{G}$. We would like to show that there exists $(b_1,\dots,b_{3s})$ such that the projective limit of the path is $l$. Moreover, the length $b_i$ can be uniquely determined by the $l$-lengths of edges of $\mathbb{G}$. This can be done by considering the following observation. The degeneration of the surface that we described above also implies that the arcs in $T$ the ideal triangulation will degenerate to the mid points of edges of $\mathbb{G}$. This induces the relation between $(b_1,\dots,b_{3s})$ and $l$ which tells us there is a one to one correspondence between $\{(b_1,\dots,b_{3s})\mid \sum^{3s}_{i=1}b_i=1\}\cap\mathcal{C}$ and $\mathcal{M}_1(\mathbb{G})$.
\end{proof}

\section{Proof of main results}
\subsection{Proof of Main Theorem}
First we explain that the renormalization that we have made on the hyperbolic metric on surface for describing the convergence will not change the image of $\mathcal{T}(S)$ under the thermodynamic map $\mathcal{I}$. This is because the function in $\mathcal{I}$-image has the form $-h_FF$. Rescaling the metric by a constant factor means rescaling the corresponding H\"older continuous function $F$ by a constant factor. This will change the entropy by the inverse of the same factor which makes the product function $-h_FF$ unchanged. Therefore during the whole proof, we will consider the function $F_\lambda\in C_0(X,\mathbb{R})$ induced by the metric $m_\lambda'$ on $S$.

By Theorem \ref{sol}, we can define the path of degeneration $r:[0,1]\rightarrow\mathcal{I}(\mathcal{M}_1(\mathbb{G})\sqcup\mathcal{T}(S))$ such that the parameter $t=1/\lambda$ and $r(t)=-h_tF_t$ where $F_t=F_\lambda$. Let $m'_t=m'_\lambda$. As a convention, $r(0)$ represents the limit metric on $\mathbb{G}$. 

\begin{lemma}
 The right derivative $r'(0)$ is a $0$ function.
\end{lemma}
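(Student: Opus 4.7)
The plan is to show that $r(t)-r(0)$ decays exponentially as $t\to 0^+$, which is much stronger than $o(t)$ and hence forces the right derivative $r'(0)$ to be the zero function in $C(X,\mathbb{R})$.

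First I would establish a uniform exponential bound
$$\sup_{x\in X}|F_t(x)-F_0(x)|=O(e^{-c/t})$$
for some constant $c>0$. The cosine-rule analysis carried out in the preceding section shows that each arc of the triangulation $T$ has $m_\lambda$-length of order $e^{-c\lambda/2}$, where $c>0$ can be bounded below by the minimal triangle-inequality margin over the white hexagons; this minimum is strictly positive because $(b_1,\ldots,b_{3s})\in\mathcal{C}$. After the $\lambda^{-1}$ rescaling the arcs remain exponentially short in $m'_\lambda$. Now fix $x\in X$: the $0$-segment coded by $x$ is a geodesic in $(S,m'_\lambda)$ running from a point on $\alpha'_{x_0}$ to a point on $\alpha'_{x_1}$, possibly crossing some uncut arcs in between. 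Its combinatorial type -- the ordered list of hexagons traversed -- depends only on finitely many symbols of $x$ and takes only finitely many values, so the number of hexagons crossed is uniformly bounded in $x$. Inside each hexagon the geodesic lies in a strip whose transverse width is bounded by the adjacent arc lengths, and its length differs from the corresponding tripod spine length by an amount of the same exponentially small order. Summing over the bounded number of hexagons yields the claimed pointwise bound, and the same comparison applied to two codings agreeing on a long central block upgrades it to convergence in the H\"older norm used in the thermodynamic formalism.

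Next I would transfer the exponential decay to the topological entropies. The entropy $h_t$ is defined implicitly by $P(-h_tF_t)=0$; since the pressure function is analytic on $C(X,\mathbb{R})$ and
$$\frac{\partial}{\partial h}P(-hF_t)\Big|_{h=h_t}=-\int_X F_t\,{\rm d}\mu_{-h_tF_t}$$
is strictly negative (because $F_t>0$), the implicit function theorem applies, and the exponential decay of $F_t-F_0$ transfers to give $|h_t-h_0|=O(e^{-c/t})$ as well.

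Combining these two estimates, I write
$$r(t)-r(0)=-(h_t-h_0)F_t-h_0(F_t-F_0),$$
which is $O(e^{-c/t})$ uniformly on $X$ and hence is $o(t)$ in both the sup norm and the H\"older norm as $t\to 0^+$. This is precisely the statement that $r'(0)$ is the zero function. The principal obstacle is the first step: although the combinatorial type of the $0$-segment is discrete, the precise endpoints on the arcs slide continuously as one varies the tail of $x$, so ensuring that this sliding contributes only an exponentially small error -- uniformly in $x$ and in the H\"older sense, rather than only pointwise or polynomially -- is the delicate computation, and it hinges crucially on the rapid collapse of the arc widths themselves.
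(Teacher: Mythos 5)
Your proposal is correct and follows essentially the same route as the paper: both rest on the hexagon-by-hexagon comparison showing that each coded segment's $m'_t$-length differs from its limit by $O(te^{-c/2t})$ (driven by the exponential collapse of the triangulation arcs, which is where membership in the cone $\mathcal{C}$ enters), and both then transfer this estimate to the entropy to conclude that $(-h_tF_t)'|_{t=0}$ vanishes. The only divergence is minor: for the entropy step you invoke the implicit function theorem on $P(-h F_t)=0$, whereas the paper derives the two-sided bound $h_0(1+\varepsilon_t)^{-1}\le h_t\le h_0(1-\varepsilon_t)^{-1}$ directly from the definition of topological entropy via the multiplicative form of the length comparison; you are in fact more explicit than the paper about uniformity in $x$ and about convergence in the H\"older norm, which the paper's pointwise, per-segment computation leaves implicit.
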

\begin{proof}
Let $\gamma$ be a geodesic entirely contained in $S$. Under the partition of $\mathrm{NW}S$, it is cut into segments. Let $\beta$ be one of them. Depending on the positions of its starting and ending points, the segment $\beta$ may pass through several right-angled hexagons in $S\setminus T$, which induces a partition of $\beta$. Each part is the intersection between $\beta$ and one right-angled hexagon. Denote by $\{\beta_1,\dots,\beta_N\}$ be the subsegment of $\beta$. Let $l_t(\beta_n)$ be the length of $\beta_n$ with respect to $m'_t$. It is clear that:
\begin{equation}
 F_t(\beta)=\sum^N_{n=1}l_t(\beta_n),
\end{equation}
and 
\begin{equation}
 F'_t(\beta)=\sum^N_{n=1}l'_t(\beta_n),
\end{equation}
To show $F'_0(\beta)=0$, it is enough to show $l'_0(\beta_n)=$ for all $n$. Notice that $\beta_n$ is the intersection of $\beta$ with either $H_n$ or $H^c_n$. We discuss these two cases separately.

Assume that $\beta_n$ is contained in $H_{n}$ which is determined by $b_{n_1}$, $b_{n_2}$ and $b_{n_3}$. When $t=0$, $\beta_n$ will be coincide with one of $b_{n_1}$, $b_{n_2}$ and $b_{n_3}$. Assume that it is $b_{n_1}$. Then $l_0(\beta_n)=b_{n_1}$. Therefore we have the following inequality:
\begin{equation}\label{est}
 l_0(\beta_n)\le l_t(\beta_n)\le l_0(\beta_n)+2 {\rm max}\{t\alpha_j(t)\mid j\in\{1,\dots,3s\}\},
\end{equation}
where $\alpha_j(t)$ is the $m_t$-length of the corresponding arc in $T$. By the estimation of $\alpha_j(t)$, for $t$ small enough, there exists a constant $c$ such that the hyperbolic length of $\alpha_j(t)$ is at most $2\exp(-c/2t)$ for any $j$. Thus the $m'_t$-length of $\alpha_j(t)$ is at most $2t\exp{(-c/2t)}$. Then the Inequality (\ref{est}) implies that: 
\begin{equation}\label{est1}
 \left|l_t(\beta_n)-l_0(\beta_n)\right|\le4te^{-\frac{c}{2t}}.
\end{equation}
We conclude that $l'_0(\beta_n)$ is $0$ when $t=0$.

Now assume that $\beta_n$ is contained in $H^c_n$ determined by $b^c_{n_1}$, $b^c_{n_2}$ and $b^c_{n_3}$. We have studied the asymptotic behavior of the lengths $tb^c_i(t)$, $tb^c_j(t)$ and $tb^c_k(t)$ in the proof of Theorem \ref{sol} as $t$ goes to $0$. By these estimation, we can conclude the same inequality for $l_t(\beta_n)$ as Inequality (\ref{est1}). This implies $l'_0(\beta_n)=0$. Taking the sum of $l'_0(\beta_n)$ over $n$, we obtain $F'_0(\beta)=0$.

A simple transformation of Inequality (\ref{est1}) is:
\begin{equation}
 l_0(\beta_n)(1-4\frac{te^{-\frac{c}{2t}}}{c'})\le l_t(\beta_n)\le l_0(\beta_n)(1+4\frac{te^{-\frac{c}{2t}}}{c'}),
\end{equation}
where $c'=\min\{b_1,\dots,b_{3s},b^c_1,\dots,b^c_{3s}\}$. By the definition of the topological entropy, we obtain the following inequality:
\begin{equation}
	h_0(1+4\frac{te^{-\frac{c}{2t}}}{c'})^{-1}\le h_t\le h_0 (1-4\frac{te^{-\frac{c}{2t}}}{c'})^{-1}.
\end{equation}
This implies the differential of $h_t$ at $0$ is also $0$. Since $h_t$ converges to $h_0$ and $F_t$ converges to $F_0$ as $t$ goes to $0$, we conclude that the function $r'(0)=(-h_0F_0)'$ is a $0$ function.
\end{proof}
This implies that the length of the path $r$ with respect to pressure metric is finite. Therefore the pressure metric on $\mathcal{T}(S)$ is incomplete.

\subsection{Proofs of Corollary \ref{WP}}
To compare the Weil-Petersson metric and the pressure metric, we would like to compare their completion. In particular, we consider their completion for the cone $\mathcal{C}$.

Recall that the Weil-Petersson metric on $\mathcal{T}(S)$ is considered as a restriction of Weil-Petersson metric in $\mathcal{T}(\mathrm{D}S)$. One observation is that the double of the ideal triangulation $T$ is a pair of pants decomposition of $\mathrm{D}S$. Therefore the path that we described in the cone $\mathcal{C}$ above for studying the incompleteness of pressure metric is a maximal pinching path for $\mathcal{T}(\mathrm{D}S)$, and moreover they all pinch the same pair of pants decomposition coming from doubling $T$. Therefore by Masur's result in \cite{Ma1} the completion for the Weil-Petersson metric in $\mathcal{T}(S)$ for this end of cone $\mathcal{C}$ is a point.

On the other hand, the main theorem tells us that the completion of the cone $\mathcal{C}$ with respect to the pressure metric is $\mathcal{M}_1(\mathbb{G})$ which has dimension strictly higher than $0$. By the uniqueness of the completion of a metric, we conclude that the pressure metric is not equivalent to the Weil-Petersson metric.

\bibliographystyle{plain}
\bibliography{thesis}

\end{document}